\DeclareMathOperator{\Em}{EM}
\DeclareMathOperator{\ha}{HA}
\DeclareMathOperator{\lpo}{LLPO}
\DeclareMathOperator{\List}{List}
\DeclareMathOperator{\rt}{RT}
\DeclareMathOperator{\Expl}{Visit}
\DeclareMathOperator{\Exp}{Exp}
\DeclareMathOperator{\Com}{Complete}
\DeclareMathOperator{\Stab}{Stable}
\DeclareMathOperator{\head}{Head}
\newcommand{\llpo}[1]{{#1}\mbox{-}\!\lpo}
\newcommand{\vgt}[1]{``#1''}
\newcommand{\NN}{\mathbb{N}}
\newcommand{\Ap}[1]{\langle #1 \rangle}
\newcommand{\bp}[1]{\left\lbrace #1 \right\rbrace}
\newcommand{\lex}{<_{\mbox{{\tiny lex}}}}
\theoremstyle{plain}
	\newtheorem{theorem}{Theorem}[section]
	\newtheorem{proposition}[theorem]{Proposition}
	\newtheorem{lemma}[theorem]{Lemma}
	\newtheorem{corollary}[theorem]{Corollary}
	\newtheorem*{claim}{Claim}
\theoremstyle{definition}
	\newtheorem{definition}[theorem]{Definition}
	\newtheorem*{RTduek}{$\mathbf{RT^2_k(\Sigma^0_n)}$}
	\newtheorem*{RTunok}{$\mathbf{RT^1_k(\Sigma^0_n)}$}
	\newtheorem*{konig}{$\mathbf{\Sigma^0_n\mbox{-}LLPO}$}
	\newtheorem*{pigeonhole}{Pigeonhole Principle for $\mathbf{\Sigma^0_n}$}
	\newtheorem*{emn}{$\mathbf{EM_n}$}
\theoremstyle{remark}
\DeclareMathOperator{\ramsey}{RT_2^2}
\DeclareMathOperator{\koenig}{\llpo{3}}
\DeclareMathOperator{\demorgan}{DeMorgan(\Sigma^0_n)}
\DeclareMathOperator{\demorgantre}{DeMorgan(\Sigma^0_3)}
\DeclareMathOperator{\RamoInfinito}{InfiniteBranch}
\DeclareMathOperator{\Omogeneo}{HomSet}
\title{Ramsey's Theorem for Pairs and $k$ Colors as a Sub-Classical Principle of Arithmetic}
\author{Stefano Berardi and Silvia Steila}
\date{}
\begin{document}
\maketitle

\begin{abstract}
The purpose is to study the strength of Ramsey's Theorem for pairs restricted to recursive assignments of $k$-many colors, with respect to Intuitionistic Heyting Arithmetic. We prove that for every natural number $k \geq 2$, Ramsey's Theorem for pairs and recursive assignments of $k$ colors is equivalent to the Limited Lesser Principle of Omniscience for $\Sigma^0_3$ formulas over Heyting Arithmetic. Alternatively, the same theorem over intuitionistic arithmetic is equivalent to: for every recursively enumerable infinite $k$-ary tree there is some $i < k$ and some branch with infinitely many children of index $i$.
\end{abstract}

\section{Introduction}

In \cite{Bishop}, Bishop classified as \emph{principle of omniscience} any principle of logic which implies the existence of something that we cannot compute. In particular the Limited Principle of Omniscience (LPO) states that for any binary infinite sequence, either every entry is null or there exists an entry which is not null. A weakening of LPO is Lesser Limited Principle of Omniscience (LLPO) which states that given any binary infinite sequence with at most one non-null entry, either all even entries are null or all odd entries are null. The omniscience lays in the capability to exclude that the null entry, if any exists, is even, or to exclude that it is odd. In \cite{LICS04}, LLPO is reformulated using predicates: if for all $x$, $y$ either $P(x)$ or $Q(y)$, then either $P(x)$ for all $x$, or $Q(y)$ for all $y$. This formulation is equivalent in Heyting Arithmetic ($\ha$) to the principle formulated by Bishop, if we represent functions through their graph. We denote with  $\llpo{(n+1)}$ the principle LLPO restricted to $\Sigma^0_n$ predicates.

Let $k$ be a natural number. \emph{Ramsey's Theorem for pairs in $k$-many colors} \cite{Ramsey} ($\rt^2_k$) states that given any coloring over the edges of the complete graph with countably many nodes in $k$-many colors, there exists an infinite homogenoeus set, i.e. there exist an infinite subset $H$ of the set of nodes and a color $i < k$ such that for any $x,y \in H$ the edge $\bp{x,y}$ has color $i$. Specker proved that there are recursive colorings in $2$-colors with no recursive homogeneous sets \cite{Specker}; hence for every natural number $k \geq 2$, $\rt^2_k$ requires omniscience in the sense of Bishop.

There are several applications of Ramsey's Theorem for pairs through mathematics and computer science. For instance Ramsey's Theorem for pairs has many applications in mathematical analysis \cite{ArgTod} and it is used to prove the complementation of B\"{u}chi's automata \cite{Buchi} and both the Termination Theorem \cite{Podelski} and the Size-Change Termination Theorem \cite{Jones1}, which characterize the termination for some class of programs (see also \cite{Gasarch}). By using constructive consequences of Ramsey's Theorem for pairs instead of Ramsey's Theorem itself, namely Almost-full Theorem \cite{Coquand} and H-closure Theorem \cite{Hclosure}, it is possible to extract bounds for programs proved to be terminating by the Termination Theorem and the Size-Change Termination Theorem \cite{Hclosure, CoquandStop, SCTTypes}. As explained in \cite{LICS04}, the amount of classical logic needed to prove a theorem reflects the hardness of studying its constructive consequences. This is our motivation for establishing how much classical logic is needed to prove Ramsey's Theorem for pairs.

A priori, it is not evident whether a classical principle expressing $\rt^2_k$ in intuitionistic arithmetic exists, but $\rt^2_k$ happens to be related to a fragment of LLPO, for every natural number $k \geq 2$. In \cite{RT22iff3LLPO} we proved that Ramsey's Theorem for pairs with recursive assignments of two colors is equivalent  to the sub-classical principle  $\koenig$ (i.e. LLPO for $\Sigma^0_2$-predicates) over $\ha$. The goal of this paper is to extend this result to any $k\geq2$, proving that $\koenig$ is equivalent to Ramsey's Theorem for pairs with recursive assignments of $k$ colors. On the one hand, since  $\rt^2_k \implies \ramsey$ trivially holds in $\ha$ for every $k \geq 2$, the result of \cite{RT22iff3LLPO} yields that $\rt^2_k$ for recursive colorings implies  $\koenig$. On the other hand proving $\koenig \implies \rt^2_k$ for recursive colorings is a non-trivial task, and the proof for $2$ colors does not generalize.

It is not too difficult to take the proof by Erd\H{o}s and Rado of Ramsey's Theorem for pairs (e.g. \cite{KohlKreuz1, Jockusch}) and to formalize it in $\ha$ extended by some classical principle. In this way we could prove that, for every natural number $k \geq 2$, $\llpo{4}$ (i.e. LLPO for $\Sigma^0_3$-predicates) implies $\rt^2_k$ for recursive colorings, but this is not yet the result we want to establish.
If we look carefully through Erd\H{o}s-Rado's proof, we notice that $\llpo{4}$ is used to prove a combinatorial statement about $k$-ary trees: any infinite recursive tree with edges in $k$ colors, and having all edges from the same node in different colors, has a branch with infinitely many edges in the same color. We propose to call this statement \emph{Ramsey's Theorem for trees}. Hence if we are able to prove Ramsey's Theorem for trees by using only $\ha$ plus $\llpo{3}$, then we may conclude that $\llpo{3}$, Ramsey's Theorem for trees and $\rt^2_k$ are equivalent over $\ha$.

Our solution involves defining a method (new, as far as we know) to explore any $k$-ary tree.  This method provides a subtree from which we can prove Ramsey's Theorem for trees by using only $\llpo{3}$. To this aim a key definition is the $D$-visit of any $k$-ary tree, where $D$ is a list reflecting the priority of colors in the tree.  This part of the paper is self-contained, and proves a combinatorial result on $k$-ary trees in Classical Arithmetic. For any infinite $k$-ary tree $U$ there exists an infinite subtree $T$, recursively enumerable in $U$, such that $T$ has only one infinite branch and this branch reflects any infinite color; namely if there are infinitely many edges in some color $c$ in $T$, then there are infinitely many edges in color $c$ in the infinite branch. We may precise how much classical logic we use: if $U$ is $\Sigma^0_1$, this result may be proved in $\ha + \llpo{3}$. The tree $U$ used in Erd\H{o}s-Rados's proof is $\Sigma^0_1$, therefore using $\llpo{3}$ we may prove there exists an infinite subtree $T$ whose only infinite branch reflects any infinite color of $T$. $T$ is the subtree we use to prove Ramsey's Theorem from $\llpo{3}$ in $\ha$.
 
This is the plan of the paper. In Section \ref{SectionDef} we explain how to state Ramsey's Theorem for pairs without using function and set variables. As in \cite{RT22iff3LLPO}, we drop function and set variables, and we consider  Heyting Arithmetic, in which we have no Excluded Middle Schema but we have the full induction schema. In Section \ref{section: T2k} we introduce $D$-visits and we present how to define the subtree we use, in Section \ref{section:3LLPOimpliesRT2n}, to prove $\koenig \implies \rt^2_k$ for recursive colorings, for every natural number $k$.

\section{Ramsey's Theorem and Classical Principles for Arithmetic}\label{SectionDef}

In this section we introduce some notations for Ramsey's Theorem and for some other classical principles, following the one used in \cite{RT22iff3LLPO}. Any natural number $k$ is identified with the set $\{0,\ \dots,\ k-1\}$. We use $\NN$ to denote the least infinite ordinal, which is identified with the set of natural numbers. For every set $X$ and every natural number $r$,
\[[X]^r =\{Y \subseteq X \mid |Y|=r\}\]
denotes the set of subsets of $X$ of cardinality $r$. If $r=1$ then $[\NN]^r$ is the set of singleton subsets of $\NN$, and just another notation for $\NN$. If $r=2$ then $[\NN]^2$ is the complete graph on $\NN$: we think of any subset $\{x,y\}$ of $\NN$ with $x \neq y$ as an edge of the graph. For us each edge $\{x,y\}$ has direction from $\min\{x,y\}$ to $\max\{x,y\}$. In the general case, the elements of $[\NN]^r$ are called \emph{hyper-edges}, but we are not concerned with hyper-edges in this paper. Let $k, r$ be positive integers, then a map 
$f:[\NN]^r \rightarrow k$
is called a \emph{coloring} of $[\NN]^r$ with $k$ colors. If $r=2$ and $f(\{x,y\})=h < k$, then we say that the edge $\{x,y\}$ has color $h$. If $f:[\NN]^r \rightarrow k$ is a map then for all $X \subseteq \NN$ we denote with $f''[X]^r$ the set of colors of hyper-edges of $X$, that is:
\[f''[X]^r= \{h\in k \mid \exists e \in [X]^r \mbox{ such that } f(e)=h \}.\]
We say that $X \subseteq [\NN]^r$ is \emph{homogeneous} for $f$, or $f$ is homogeneous  on $X$, if $X$ is inhabited and all hyper-edges of $X$ have the same color, that is, there exists $h<k$ such that $f''[X]^r=\{h\}$. We also say that $X$ is homogeneous for $f$ in color $h$.
If $r=1$ we can think of the function $f$ as a point coloring map on natural numbers. In this case a homogeneous set $X$ is any set of points of $\NN$ which all have the same color. If $r=2$ we can think of the function $f$ as an edge coloring of a graph that has as its vertices the natural numbers. In this case a homogeneous set $X$ is any inhabited set of elements of $\NN$ whose connecting edges all have the same color.

We denote Heyting Arithmetic, with one symbol and axioms for each primitive recursive map, with $\ha$. We work in the language for Heyting Arithmetic with all primitive recursive maps, extended with the symbols $\{f_0, \dots, f_n\}$, where $n$ is a natural number and $f_i$ denotes a total recursive function for all $i <  n+1$. These $f_i$ will indicate an arbitrary coloring in the formulation of Ramsey's Theorem below. If $P= \forall x_1 \exists x_2 \dots p(x_1, x_2, \dots)$, with $p$ arithmetic atomic formula, and $Q= \exists x_1 \forall x_2 \dots \neg p(x_1, x_2 \dots)$, then we say that $P$, $Q$ are dual each other and we write $P^\bot=Q$ and $Q^\bot=P$. Dual is defined only for prenex formulas as $P$, $Q$.
We consider the classical principles as statement schemas as in \cite{LICS04}. A \emph{conjunctive schema}
is a set $\mathcal{C}$ of arithmetical formulas, expressing the second order statement ``for all $P$ in $\mathcal{C}$,
$P$ holds'' in a first order language. We prove a conjunctive schema $\mathcal{C}$ in $\ha$ if we prove any $P$
in $\mathcal{C}$ in $\ha$. A conjunctive schema $\mathcal{C}$ implies a formula $P$ in $\ha$ if $s_1 \wedge \dots \wedge s_n \vdash P$ in $\ha$ for some $s_1,\ \dots, \ s_n \in \mathcal{C}$. The conjunctive schema $\mathcal{C}$ implies another conjunctive schema $\mathcal{C}'$ in $\ha$ if $\mathcal{C}$ implies $P$ in $\ha$ for any $P$ in $\mathcal{C}'$. In order to express Ramsey's Theorem we also have to consider the dual concept of \emph{disjunctive schema} $\mathcal{D}$, expressing the second order statement ``for some $P$ in $\mathcal{D}$, $P$ holds'' in a first order language. We prove a disjunctive schema $\mathcal{D}$ in $\ha$ if we prove $s_1 \vee \dots  \vee s_n$ in $\ha$ for some $s_1, \dots , s_n \in \mathcal{D}$. A disjunctive schema $\mathcal{D}$ implies a formula $P$ in $\ha$ if $s \vdash P$ in $\ha$ for all $s \in \mathcal{D}$.

Ramsey's Theorem is a very important result for finite and infinite combinatorics. In this paper we study Ramsey's Theorem in $k$ colors, for singletons and for pairs. They are informally stated as follows:

\begin{RTunok}
For any coloring $c_a:\NN \rightarrow k$ of vertices with a parameter $a$, there exists an infinite subset of $\NN$ homogeneous for the given coloring. ($c_a \in \Sigma^0_n$).
\end{RTunok}
\begin{RTduek}
For any coloring $c_a:[\NN]^2 \rightarrow k$ of edges with a parameter $a$, there exists an infinite subset of $\NN$ homogeneous for the given coloring. ($c_a \in \Sigma^0_n$).
\end{RTduek}

$\ramsey(\Sigma^0_0)$ (respectively $\rt^1_k(\Sigma^0_0)$) says that given a family $\{c_a \mid a \in \NN\}$ of recursive edge (node) colorings  of a graph with $\NN$ nodes, then for any coloring there exists a subgraph with $\NN$ nodes such that each edge (node) of the subgraph has the same color. 

Here $c = \{c_a \mid a \in \NN\}$ denotes any recursive family of recursive assignment of $k$ colors. In this work we formalize Ramsey's Theorem for $k$ colors for pairs (respectively, for singletons) and for recursive colorings by the following disjunctive schema which we call \emph{Ramsey's schema}:
\[\forall a ( \bigvee \bp{ C_i(., c_a) \mbox{ is infinite and homogeneous }\mid i < k}).\]
A sufficient condition to prove Ramsey schema is to find at least $k$-many arithmetical predicates $C_0$, \dots, $C_{k-1}$ and a proof of $\forall a (C_0 (., c_a)$ is infinite homogeneous in color $0$  $\vee \dots \vee \  C_{k-1}(., c_a)$ is infinite homogeneous in color $k-1$) in $\ha$. For short we say that for each recursive family of recursive colorings there is a homogeneous set.

The conjunctive schemata for $\ha$ we consider, expressing classical principles and taken
from  \cite{LICS04}, are the followings.

\begin{konig} Lesser Limited Principle of Omniscience.
For every parameter $a$
\[\ \forall x, x'\ (P(x,a)\ \vee \ Q(x',a)) \implies \forall x  P(x,a)\  \vee\ \forall x Q(x,a). \ (P,\ Q \in \Sigma^0_{n-1})\]
\end{konig}

$\koenig$ is a kind of law for prenex formulas and if we assume the Axiom of Choice it is equivalent to Weak K\"{o}nig's Lemma for $\Sigma^0_{n-1}$ trees.

\begin{pigeonhole}The Pigeonhole Principle states that given a partition of infinitely many natural numbers in two classes, then at least one of these classes has infinitely many elements.
For every parameter $a$
\[ \ \forall x \ \exists z \ [z \geq x \ \wedge \ (P(z,a) \vee Q(z,a))]\implies\]
\[ \forall x \ \exists z \ [z\geq x \ \wedge \ P(z,a)] \ \vee \ \forall x \ \exists z \ [z \geq x \ \wedge \  Q(z,a)]. \ (P, Q \in \Sigma^0_{n})\]
\end{pigeonhole}

\begin{emn} Excluded Middle for $\Sigma^0_n$ formulas.
For every parameter $a$
\[ \exists x \ P(x,a) \ \vee \ \neg \exists x  \ P(x,a).\ (P \in \Pi^0_{n-1}) \]
\end{emn}

Recall that $P^\bot$ denotes the dual of $P$ for any prenex $P$. As shown in \cite[corollary 2.9]{LICS04} the law of Excluded Middle for $\Sigma^0_n$ formulas is equivalent in $\ha$ (that is, only using intuitionistic arithmetical reasoning) to 
\[ \exists x \ P(x,a) \ \vee \ \forall x  \ P(x,a)^\bot .\ (P \in \Pi^0_{n-1})\] 
In all our schemata we use parameters. The parameter $a$ is necessary since we need to use in $\ha$ statements with a free variable $a$, like
$\forall a \ (\forall x \ P(x,a) \ \vee \ \exists x \ \neg P(x,a))$
in our proof.


\section{Complete D-visits} \label{section: T2k}

In this section we define a visit of $k$-ary trees with respect to a list $D$ of priority among colors (an ordering of some subset of all colors), then we prove its properties.

Given an arithmetical predicate $X$, we write $\Em(X)$ to denote the Law of Excluded Middle for predicates of the same complexity as $X$. And $\Em(X+1)$ to denote the Law of Excluded Middle for predicates with one quantifier more than the predicate $X$.  A $k$-tree on colors $C= \Ap{c_0, \dots, c_{k-1}}$ is a subset of the set of finite lists with elements in $C$ which includes the empty sequence and which is closed under prefix. Given two finite lists $\lambda$ and $\mu$ we write $\lambda \prec \mu$ if $\lambda$ is a proper prefix of $\mu$. We use $\lex$ to denote the lexicographical order of finite sequences.

Let $U$ be a $k$-ary tree on $C= \Ap{c_0, \dots, c_{k-1}}$. We define a subtree $T$ of $U$ which satisfies the following properties. Furthermore, the proof requires a limited amount of classical logic: it may be done in $\ha + \Em(U+1)$.
\begin{itemize}
\item[(a)] There exists some predicate in $\ha$ which is $\Sigma^0_1$ with respect to an oracle for $U$, and which represents $x \in T$.
\item[(b)] $T$ has a unique infinite branch $r$ defined by some predicate of $\ha$.
\item[(c)] If there exist infinitely many edges with color $h$ in $T$, then there are infinitely many edges with color $h$ in $r$.
\end{itemize}

The construction is done by induction over $k$. Assume that a lazy artist has a new work to paint. Given a $k$-ary tree with order on colors $C= \Ap{c_0, \dots, c_{k-1}}$, he has to draw a subtree on the wall of a huge room. 
The only wishes of the customer are 
\begin{itemize}
\item he can draw a node only if he has already drawn its father;
\item if $k>0$, hence the tree has to be infinite.
\end{itemize}
Of course, if $k=0$ the artist can just draw the root $x_0$ and then he is done.
Assume that the artist already knows how to paint a subtree for a $k$-ary tree and he had to paint a subtree of a $(k+1)$-tree. He decides to avoid using colors with lower index, like $c_0$, whenever it is possible. Since he is very lazy,  he decides to avoid changing the color he is currently using, whenever it is possible. So when the artist starts using the color $c_0$, which he initially avoided, he tries no changing it any more, and put $c_0$ at the end of his color list, with the highest index and therefore with top priority. As a result, he paints a very peculiar subtree, which we call complete $C$-visit of $U$.

\begin{enumerate}
\item Given root $x_0$ and an order on colors $\Ap{c_0, \dots, c_{k}}$, first he paints a tree for colors $\Ap{c_1, \dots, c_{k}}$ with root $x_0$. If it is infinite he is done. Otherwise let $L$ be an enumeration of the nodes he obtained.
\item Given $i < |L|$, he chooses a $c_0$-child $y$ of the $i$-th node in $L$ (if it exists) and he performs step (1) with root $y$ and colors $\Ap{c_1, \dots, c_{k},c_0}$. If it is infinite he is done. Otherwise he does the same for $i+1$.
\end{enumerate}

Assume that $k=1$ (the only color is $0$). Then the tree is a straight line and the artist paints all its edges in a row. Assume that $k=2$ (the only colors are $0$ and $1$). By unfolding the previous rules, the artist first paints the longest line avoiding all edges in color $1$. If this branch is finite, he paints the first line with all edges in the color $0$ departing from some node in it, and so forth. Whenever he starts painting some subtree, he moves out of it only if he completely paints it, otherwise he continues forever inside the same subtree. When $k=3$ (the only colors $0,1,2$), the artist starts with the colors $1,2$ and the color $0$ shows up only if it is impossible to paint forever from the root using $1,2$ only. In this case the order between the colors is permuted cyclically, now the painter uses $0,2$ in preference, and $1$ only when is forced to. 

Although it is not self-evident, we claim that the tree painted by the artist satisfies the Properties (a), (b), (c); namely it is $\Sigma^0_1$ in $U$, it has a unique infinite branch $r$ and if there exist infinitely many edges with color $h$ in the tree, then there are infinitely many edges with color $h$ in $r$.

\begin{theorem}\label{theorem:k+1}
Let $(U,\prec)$  be a $k$-ary tree. Then there exists $f_C: \NN \to \NN$ recursive enumerable in $U$ and such that in $\ha + \Em(U+1)$ we can prove the following.
\begin{enumerate}
\item $(f_C''\NN, \prec)$ is a subtree of $U$;
\item if $U$ is infinite, then $f_C''\NN$ is infinite;
\item $f_C''\NN$ satisfies Properties {\normalfont (a), (b), (c)}.
\end{enumerate}
\end{theorem}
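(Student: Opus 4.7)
The plan is to mirror the lazy-artist recipe by induction on $k$, defining $f_C$ as an explicit dovetailed enumeration in $U$ and then verifying the stated properties via classical case analysis based on $\Em(U+1)$. The base cases $k=0$ (enumerate only the root) and $k=1$ (follow the unique line of edges) are immediate.

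For the inductive step, let $U$ be a $(k+1)$-ary tree with palette $C=\langle c_0,\ldots,c_k\rangle$, and set $C'=\langle c_1,\ldots,c_k\rangle$ and $C''=\langle c_1,\ldots,c_k,c_0\rangle$. I would first apply the inductive hypothesis to $U$ with $c_0$-edges forbidden and palette $C'$, rooted at $x_0$, producing an enumeration $g_0$ of a tree $T_0$. Concurrently, enumerate the nodes of $T_0$ as $\langle y_0,y_1,\ldots\rangle$; for each $i$, search $U$ for a $c_0$-child $z_i$ of $y_i$, and if found, recursively apply the procedure to the subtree of $U$ rooted at $z_i$ with the cyclically shifted palette $C''$, yielding an enumeration $g_{i+1}$ of a tree $T_{i+1}$. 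Let $f_C$ be the dovetailing of $g_0,g_1,\ldots$ together with the connecting edges $(y_i,z_i)$. Then $f_C$ is recursive in $U$ by construction, and $T=f_C''\NN$ admits the $\Sigma^0_1(U)$ membership predicate $\exists n\,f_C(n)=x$, giving Property~(a). Note that the recursive call for $T_{i+1}$ uses a palette of the same size $k+1$, but its internal first step reduces the palette to size $k$, so the whole procedure is well-founded under the induction.

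To establish clauses (1), (2) and Property~(b), I would use $\Em(U+1)$ to decide, for each $i$, whether $T_i$ is infinite. A short case analysis shows that when $U$ is infinite, exactly one of the $T_i$ is infinite; its unique infinite branch (inductive Property~(b)), prefixed by the path in $T_0$ from $x_0$ to $y_i$ together with the single $c_0$-edge to $z_i$, gives the unique infinite branch $r$ of $T$. That $T$ is a subtree of $U$ follows by construction.

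The main obstacle is Property~(c). Fix a color $h\in C$ and suppose $T$ contains infinitely many $h$-edges. All finite $T_i$ contribute only finitely many edges in total, so the $h$-edges must come either from the unique infinite $T_j$ or from the connecting edges $(y_i,z_i)$, which are all of color $c_0$. If $h=c_0$ and infinitely many come from connecting edges, each of these lies on $r$ (since $r$ traverses the relevant portion of $T_0$), and we are done. Otherwise the $h$-edges lie inside $T_j$; but $h$ is in the palette of $T_j$, which is $C'$ when $j=0$ (so automatically $h\ne c_0$, since $T_0$ has no $c_0$-edges at all) or $C''$ when $j\ge 1$ (which lists every color of $C$, including $c_0$). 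In either subcase the inductive hypothesis applied to $T_j$ yields infinitely many $h$-edges on its unique infinite branch, which is the tail of $r$. The delicate point is precisely why the palette must be cyclically shifted from $C'$ to $C''$ at step~(2): without reintroducing $c_0$ at the end of $C''$, the inductive hypothesis could not handle the subcase $h=c_0$ with $j\ge 1$, and the reflection of colors onto $r$ would fail.
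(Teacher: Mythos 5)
Your high-level recipe is the paper's ``lazy artist'' construction, but the way you schedule the sub-enumerations breaks it. You define $f_C$ as the \emph{dovetailing} of $g_0,g_1,\dots$, launching the search for a $c_0$-child of $y_i$ and the recursive call $g_{i+1}$ concurrently with $g_0$. The intended construction is strictly sequential: phase $i+1$ may begin only after phase $i$ has been \emph{completed}, and if $T_0$ is infinite no later phase is ever started. This is implementable because completeness of a finite visit (every $c$-child in $U$ of a node already listed is itself listed, for each $c$ in the current palette) is decidable relative to an oracle for $U$; this is exactly what the predicate $\Com(L,c)$ and the clause ``$M$ is $\Ap{d_1,\dots,d_{h-1}}$-complete'' in the definition of a $D$-visit encode, and what makes $f_D^\lambda$ recursive in $U$ by minimalization (Theorem \ref{theorem:existenceuniqueness}). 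With genuine dovetailing your tree is a different object and clauses (b) and (c) fail: for $U$ the full binary tree on $\Ap{c_0,c_1}$, $g_0$ enumerates the infinite $c_1$-branch, yet every $y_i$ on it has a $c_0$-child, so you simultaneously enumerate infinitely many further infinite subtrees; the resulting $T$ has infinitely many infinite branches, and infinitely many $c_0$-edges none of which lie on a common branch. Your claim that ``exactly one of the $T_i$ is infinite'' is true only for the sequential schedule.

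The second gap is the induction itself. Your induction is on the number of colors $k$, but for $j\geq 1$ the subtree $T_j$ carries the cyclically shifted palette $C''$ of the \emph{same} size $k+1$, so when you invoke ``the inductive hypothesis applied to $T_j$'' to obtain its unique infinite branch and the reflection of $h$-edges onto it, you are invoking the very statement you are proving. Your remark that the first internal step of $T_j$ drops to size $k$ justifies that the \emph{enumeration} is well defined (the paper's definition of a $D$-visit uses a principal induction on the palette length and a secondary induction on the length of the list for exactly this reason), but it does not yield Properties (b) and (c) for $T_j$. The paper avoids the circularity by changing the induction: the unique branch is obtained as the downward closure of the \emph{stable} nodes (Lemma \ref{lemma:stablebasic}, Proposition \ref{corollary:stable}), and Property (c) is proved in Proposition \ref{proposition:stable2} by induction on the \emph{position} of the color $h$ in the priority list, which strictly decreases at each cyclic shift, the base case $i=0$ being handled directly by Lemma \ref{lemma:stable0}. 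Some argument of this kind --- tracking the position of $h$ rather than the size of the palette --- is needed to turn your final paragraph into a proof.
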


\subsection{Proof of Theorem \ref{theorem:k+1}}
Throughout this section assume given an arithmetical $k$-ary tree $U$ with colors in $C=\Ap{c_0, \dots, c_{k-1}}$. We write $D \subseteq C$ when $D$ is a list composed of distinct elements of $C$, possibly in an order different from the order they have in $C$.

\begin{definition}
Given a finite list $L$ of nodes in $U$ and a color $c$, we define the predicate $\Com(L,c)$ (and we say that \emph{$L$ is $c$-complete}) if any child of color $c$ in $U$ of a node in $L$ is also in $L$:
\[
 	\forall \lambda \in L ( \lambda*\Ap{c} \in U \implies \lambda*\Ap{c} \in L ).
\]
Given $D \subseteq C$, we say that \emph{$L$ is $D$-complete} if $L$ is $d$-complete, for all $d \in D$.
\end{definition}

\begin{definition}
Given a finite list $L$ of nodes in $U$, $\mu \in L$, natural number $n$ and a color $c$, we define the predicate \emph{$\mu*\Ap{c}$ is the $n$-th $c$-expansion of $L$}, which we write $\Exp(\mu*\Ap{c}, L,n,c)$. The definition is by induction on $n$.
$\Exp(\mu*\Ap{c},L,n,c)$ holds if $\mu*\Ap{c}$ is the $n$-th node of $U$, in the lexicographic ordering, having the form $\nu*\Ap{c}$ for some $\nu$:
\begin{itemize}
\item  $n=0$ and $\mu*\Ap{c} \in U$ and $\forall \eta \lex \mu \neg (\eta*\Ap{c} \notin U)$. 
\item  $n>0$ and $\mu*\Ap{c} \in U$ and $\exists \eta_0, \dots, \eta_{n-1} \lex \mu $ such that $(\forall i \in n (\eta_i*\Ap{c} \in U)$ and $\forall \eta < \mu (\eta*\Ap{c} \in U \implies \exists i \in n (\eta =\eta_i))$.  
\end{itemize}
\end{definition}

In order to define a tree $T$ as required by Theorem \ref{theorem:k+1}, instead of the lexicographic ordering $\lex$ on lists of $C$ we can use any total ordering of $U$. We introduce now a notion of visit of $U$ we use to enumerate the nodes of $T$ (it is the drawing rule selected by the \vgt{artist}). In order to provide a recursive definition, for any subset $D=\Ap{d_0, \dots, d_{h-1}}$ of the set of colors $C$ and for any $\lambda \in U$, we define visits of $U$ with priority $D$ of colors from $\lambda$ (just $D$-visits from $\lambda$ for short). Informally a $D$-visit of $U$ defines a finite subtree of $U$ in such a way that
\begin{itemize}
\item the color $d_0$ has the lowest priority, we try to use only $\Ap{d_1, \dots, d_{n-1}}$;
\item when forced to use $d_0$, we give to it the highest priority; i.e. we keep working with priority $\Ap{d_1, \dots, d_{n-1}, d_0}$.
\end{itemize}

\begin{definition} Given $D = \Ap{d_0, \dots, d_{h-1}} \subseteq C$ we define the predicate $\Expl(L,D,\lambda)$ on finite lists (and we say \emph{$L$ is a $D$-visit from $\lambda$}) by principal induction over $h$ and secondary induction over the size of $L$. $\Expl(L,D,\lambda)$ holds if either
\begin{itemize}
\item $h=0$ and $L= \Ap{\lambda}$.
\item $h>0$ and there exists $n < |L|$ such that $L  = M* L_0 * \dots * L_{n-1}$ and the following hold:
\begin{itemize}
\item $M$ is a $\Ap{d_1, \dots, d_{h-1}}$-visit from $\lambda$, i.e. $\Expl(M,\Ap{d_1,\dots, d_{h-1}}, \lambda)$.
\item If $n>1$, $M$ is $\Ap{d_1, \dots, d_{h-1}}$-complete.
\item For all $j < n$, $\head(L_j)$ is the $j$-th expansion of $M$ in color $d_0$ and $L_j$ is a $\Ap{d_1, \dots, d_{h-1},d_0}$-visit from $\head(L_j)$, namely 
\[
    (\Exp(\head(L_j),M, j, d_0) \wedge \Expl(L_j,\Ap{d_1,\dots, d_{h-1},d_0},\head(L_j))).
\]
\item For all $j < n-1$, $L_j$ is $D$-complete.
\end{itemize}
\end{itemize}
\end{definition}

It is straightforward to show directly that $\Expl(L,D,\lambda)$ is recursive in $U$, and that any element of $L$ but the first is the descendant number $c$ of some previous node of $L$, for some $c \in D$. We may prove that any $D$-visit $L$ of $U$ from a node $\lambda$ is some one-to-one enumeration of some subtree of $U$ of root $\lambda$, with all edges with colors in $D$. 

\begin{lemma}\label{lemma:subtree}
Let $D = \Ap{d_0, \dots, d_{h-1}} \subseteq C$ and let $\lambda \in U$ and let $L$ be a $D$-visit from $\lambda$. Then $L$ has no repetitions and $(\bp{\mu \mid \mu \in L}, \prec)$ is a subtree of $U(\lambda)$.
\end{lemma}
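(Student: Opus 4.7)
I will prove the two claims simultaneously, by the same double induction used to define $\Expl(L,D,\lambda)$: principal on $h = |D|$ and secondary on $|L|$. It is convenient to strengthen the induction hypothesis with the auxiliary observation that every element of a $D$-visit from $\lambda$ extends $\lambda$ by a (possibly empty) sequence of colors drawn from $D$; this drops out directly from the definition of $\Expl$ together with the parent-child relation imposed by each expansion step. With this invariant in hand, the two statements of the lemma go together naturally.

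The base case $h = 0$ is immediate: $L = \Ap{\lambda}$, so there are no repetitions and $\{\lambda\}$ is trivially a subtree of $U(\lambda)$. For the inductive step, decompose $L = M * L_0 * \dots * L_{n-1}$ as in the definition. If $n = 0$ then $L = M$, and the principal IH applied to the shorter priority list $\Ap{d_1,\dots,d_{h-1}}$ gives both conclusions outright. Otherwise, apply the principal IH to $M$ and the secondary IH to each $L_j$ (here $|L_j| < |L|$ because $M$ is nonempty) to obtain that $M$ and every $L_j$ have no repetitions and that their underlying sets are subtrees of $U(\lambda)$ and $U(\head(L_j))$ respectively.

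The main work is to check that the blocks $M, L_0, \dots, L_{n-1}$ are pairwise disjoint. For $M \cap L_j = \emptyset$: by the auxiliary invariant, every $\mu \in M$ extends $\lambda$ using only colors in $\Ap{d_1,\dots,d_{h-1}}$, whereas every $\nu \in L_j$ is a descendant of $\head(L_j) = \mu_j * \Ap{d_0}$ for the appropriate $\mu_j \in M$, so the unique path in $U$ from $\lambda$ to $\nu$ must traverse the edge from $\mu_j$ of color $d_0$, forcing $\nu \notin M$. For $L_j \cap L_{j'} = \emptyset$ with $j \neq j'$: the definition of $d_0$-expansion forces $\head(L_j)$ and $\head(L_{j'})$ to be distinct nodes of $U$, each of which is a $d_0$-child of a node of $M$; since $U$ is a tree, the subtrees of $U$ rooted at $\head(L_j)$ and $\head(L_{j'})$ are disjoint, and hence so are the $D$-visits living inside them.

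Once disjointness is established, the subtree property follows by gluing: $\{\mu \mid \mu \in L\}$ is the disjoint union of $\{\mu \mid \mu \in M\}$ with the sets $\{\nu \mid \nu \in L_j\}$, and the joining point at each $L_j$ is precisely $\mu_j \in M$, the parent in $U$ of the root $\head(L_j)$ of that block. Prefix-closure within $U(\lambda)$ is then inherited from the prefix-closure of $M$ and of each $L_j$, together with the single added edge $\mu_j \prec \head(L_j)$. I expect the main obstacle to be formalising the auxiliary invariant cleanly, since it is what lets the intuitive \emph{unique-path} argument go through rigorously; once that invariant is in place, disjointness and the subtree property are essentially bookkeeping.
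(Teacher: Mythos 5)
Your proof is correct and follows essentially the same route as the paper: principal induction on $h$, secondary induction on $|L|$, decomposition $L = M * L_0 * \dots * L_{n-1}$, and the induction hypotheses applied to the blocks $M$ and $L_j$. The only real difference is that you establish the pairwise disjointness of $M, L_0, \dots, L_{n-1}$ explicitly via the colour-trace invariant, whereas the paper asserts it as immediate from the definition; this extra care is welcome but does not change the structure of the argument.
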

\begin{proof}
By unfolding definition we have to show that for every $L$:
\[
	\forall \mu \in L \forall \eta \prec \mu( \lambda \preceq \eta \implies \eta \in L ).
\]
We prove it by induction on $h$.  If $h=0$, then $L= \Ap{\lambda}$. The thesis follows. 

If $h>0$, we prove our goal by secondary induction on $|L|$. By definition $L=M*L_0*\dots*L_{n-1}$. If $\mu \in M$, by inductive hypothesis on $h-1$, we have $ \forall \eta \prec \mu( \lambda \prec \eta \implies \eta \in M \subseteq L)$. Assume that $\mu \in L_j$ for some $j < n$. Since $\lambda$ is not in $L_j$, by secondary inductive hypothesis $L_j$ is a subtree of $U(\head(L_j))$. Hence we have $\forall \eta \prec \mu( \head(L_j) \prec \eta \implies \eta \in L_j \subseteq L)$. Since by construction for all $j < n$ there exists $\xi$ in $M$ such that $\head(L_j)=\xi*\Ap{d_0}$ we are done.

We prove that $L$ has no repetitions by induction on $h$. If $h=0$, then $L=\Ap{\lambda}$ and we are done. If $h>0$, we proceed by secondary induction on $|L|$. By definition $L=M*L_0*\dots*L_{n-1}$, with $M, L_0, \dots L_{n-1}$ pairwise disjoint. Hence any repetition in $L$ is a repetition either in $M$ or in some $L_j$. The thesis follows since we cannot have repetitions neither in $M$, by induction hypothesis on $h-1$, nor in $L_j$, by secondary induction hypothesis.
\end{proof}

Given two finite lists $\eta$ and $\mu$ we write $\eta \vartriangleleft \mu$ to mean that $\eta$ is a suffix of $\mu$. 

\begin{definition}
Let $D = \Ap{d_0, \dots, d_{h-1}} \subseteq C$ and let $\lambda \in U$. We define the \emph{$D$-subtree of $U(\lambda)$} as the subtree of all branches of $U$ with colors in $D$:
\[
	\mu \in U_D(\lambda) \iff  \exists \eta \vartriangleleft \mu ( \mu = \lambda * \eta  \ \wedge \  \eta \in \List( \Ap{d_0, \dots, d_{h-1}})
\]
\end{definition}

Notice that for every $\lambda \in U$, $U_D(\lambda)$ is recursive in $U$. Any $D$-complete $D$-visit of $U$ from some node $\lambda$ happens to cover $U_D(\lambda)$. 

\begin{lemma}\label{lemma:subtreeD}
Let $D = \Ap{d_0, \dots, d_{h-1}} \subseteq C$ and let $\lambda \in U$ and let $L$ be a $D$-visit from $\lambda$. $(\bp{\mu \mid \mu \in L},\prec)$ is a subtree of $U_D(\lambda)$, and it is equal to $U_D(\lambda)$ if $L$ is $D$-complete.  
\end{lemma}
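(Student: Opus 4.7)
The plan is to prove both assertions together by the same principal-on-$h$, secondary-on-$|L|$ induction that defines $D$-visits. Lemma \ref{lemma:subtree} already gives that $L$ (as a set of finite lists) is a subtree of $U(\lambda)$, so the task reduces to showing each $\mu \in L$ has the form $\lambda * \eta$ for some $\eta \in \List(D)$, and, under completeness, the converse. The base case $h=0$ is immediate since $L = \Ap{\lambda}$. For $h > 0$ I decompose $L = M * L_0 * \cdots * L_{n-1}$ as in the definition of $\Expl$: the principal induction hypothesis applied to $M$ yields $M \subseteq U_{\Ap{d_1,\dots,d_{h-1}}}(\lambda) \subseteq U_D(\lambda)$; and for each $L_j$ the secondary induction hypothesis, available because $|L_j| < |L|$ (since $|M| \geq 1$), gives $L_j \subseteq U_{\Ap{d_1, \dots, d_{h-1}, d_0}}(\head(L_j))$. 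This latter set coincides with $U_D(\head(L_j))$, because the definition of $U_{(\cdot)}(\cdot)$ depends only on the underlying set of colors. Since $\head(L_j) = \xi_j * \Ap{d_0}$ with $\xi_j \in M \subseteq U_D(\lambda)$, concatenating the suffixes witnesses $L_j \subseteq U_D(\lambda)$.

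For the equality under $D$-completeness, the inclusion already done handles one direction, so I only need $U_D(\lambda) \subseteq L$. Given $\mu = \lambda * \eta \in U_D(\lambda)$ with $\eta \in \List(D)$, I argue by induction on $|\eta|$. The base case $\eta = \Ap{}$ reduces to $\lambda \in L$, which follows from a trivial induction on $h$: for $h=0$ we have $L = \Ap{\lambda}$, and for $h>0$ the head of $L$ equals the head of $M$, which by the inner induction is $\lambda$. For the step, write $\eta = \eta' * \Ap{d}$ with $d \in D$; the induction hypothesis gives $\lambda * \eta' \in L$, and since $\mu = (\lambda * \eta') * \Ap{d}$ lies in $U$ (being in $U_D(\lambda) \subseteq U$), $d$-completeness of $L$ forces $\mu \in L$.

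I do not expect a single deep step but rather a bookkeeping difficulty: the main obstacle is keeping the two inductions synchronized across the shift of priority list that occurs inside each $L_j$. The observation that unlocks this cleanly is that $U_D(\cdot)$ depends only on $D$ as a set and not on its ordering, so the cyclic permutation of priorities when passing from $M$ into a $d_0$-subtree does not change the admissible colorings of paths from $\lambda$; this is precisely why the secondary induction hypothesis on $L_j$, phrased for the permuted priority list, still delivers membership in $U_D(\lambda)$.
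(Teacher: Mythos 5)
Your proof is correct and follows essentially the same route as the paper: the second half (induction on the length of the suffix, using $d$-completeness at each step) is the paper's argument verbatim, and your first half merely spells out by explicit induction what the paper dispatches by citing Lemma \ref{lemma:subtree} together with the earlier remark that every non-initial element of a $D$-visit is a $d$-child, $d \in D$, of an earlier element. Your observation that $U_D(\lambda)$ depends only on $D$ as a set, so the cyclic permutation of priorities inside each $L_j$ is harmless, is a worthwhile point that the paper leaves implicit.
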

\begin{proof}
Let $L$ be a  $D$-visit from $\lambda$. By Lemma \ref{lemma:subtree}, $((\bp{\mu \mid \mu \in L},\prec), \prec)$ is a subtree of $U(\lambda)$. By definition of $D$-visit any edge in $((\bp{\mu \mid \mu \in L},\prec), \prec)$ has color in $D$, thus we obtain that $L$ is a subtree of $U_D(\lambda)$. 

Moreover assume that $L$ is $D$-complete. Then, by definition, every child in $U_D(\lambda)$ of a node in $L$ is in $L$. It is straightforward to show by induction on $m$ that
\[
	\forall m  \forall \mu \in U_D(\lambda) ( |\mu|= |\lambda| + m  \implies \mu \in L).
\]
Indeed if $m=0$, then $\mu = \lambda$ and $\lambda \in L$. Otherwise if $m>0$, then $\mu = \xi * \Ap{d_i} \in U_D(\lambda)$, for some $i \in h$ and $\xi \in U_D(\lambda)$. By induction hypothesis, $|\xi| = m-1$, $\xi \in L$. By $d_i$-completeness, $\xi *\Ap{d_i} \in L$. 
\end{proof}

As a corollary any $D$-complete $D$-visit from some node $\lambda$ is maximal over the $D$-visits from $\lambda$ with respect to the prefix order of lists.

\begin{corollary}\label{lemma:complprefix}
Let $D = \Ap{d_0, \dots, d_{h-1}} \subseteq C$, let $\lambda \in U$ and let $L$, $L'$ be two $D$-visits from $\lambda$. If $L$ is a prefix of $L'$ and $L$ is $D$-complete then $L=L'$
\end{corollary}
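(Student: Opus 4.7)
The plan is to combine the two previous lemmas. By Lemma \ref{lemma:subtreeD} applied to the $D$-complete $D$-visit $L$, the underlying set $\{\mu \mid \mu \in L\}$ equals $U_D(\lambda)$. Applying the same lemma to the $D$-visit $L'$ (for which we do not assume completeness), the underlying set $\{\mu \mid \mu \in L'\}$ is contained in $U_D(\lambda)$. Hence $\{\mu \mid \mu \in L'\} \subseteq \{\mu \mid \mu \in L\}$.

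On the other hand, since $L$ is a prefix of $L'$, we trivially have $\{\mu \mid \mu \in L\} \subseteq \{\mu \mid \mu \in L'\}$. Together these yield the set equality $\{\mu \mid \mu \in L\} = \{\mu \mid \mu \in L'\}$.

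Now I would invoke Lemma \ref{lemma:subtree} to conclude that both $L$ and $L'$ are repetition-free enumerations. Since they enumerate the same finite set without repetitions, $|L| = |L'|$, and because $L$ is a prefix of $L'$ this forces $L = L'$. There is no real obstacle here: the only subtlety is to recognise that $D$-completeness of $L$ is exactly what turns the inclusion provided by Lemma \ref{lemma:subtreeD} into an equality of underlying sets, after which repetition-freeness upgrades the set equality to list equality.
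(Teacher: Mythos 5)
Your proof is correct and uses exactly the same two ingredients as the paper's: Lemma \ref{lemma:subtreeD} to identify the underlying set of the $D$-complete visit $L$ with $U_D(\lambda)$ (and to bound $L'$ inside it), and Lemma \ref{lemma:subtree} for repetition-freeness. The paper phrases the last step as a contradiction (a proper extension of $L$ would force a repeated node in $L'$) where you count lengths, but this is the same argument.
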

\begin{proof}
By Lemma \ref{lemma:subtreeD}, $(\bp{\mu \mid \mu \in L},\prec)$ and $(\bp{\mu \mid \mu \in L'},\prec)$ are subtrees of $U_D(\lambda)$. Moreover, since $L$ is $D$-complete, by Lemma \ref{lemma:subtreeD} every $\mu \in U_D(\lambda)$ belongs to $L$.  Assume by contradiction that $L = L * \Ap{\mu} \leq L'$ for some $\mu$. Since $\mu \in U_D(\lambda)$, we have $\mu \in L$. Hence in $L'$ there is a repetition, but this contradicts Lemma \ref{lemma:subtree}.
\end{proof}

Applying Corollary \ref{lemma:complprefix}, we may show that the prefix is a linear order over the $D$-visits from $\lambda$.

\begin{proposition}\label{proposition:prefix}
	Let $D = \Ap{d_0, \dots, d_{h-1}} \subseteq C$, let $\lambda \in U$ and let $L$, $L'$ be two $D$-visits from $\lambda$. Then $L$ and $L'$ are comparable by prefix. 
\end{proposition}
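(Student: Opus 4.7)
I propose to proceed by induction on the length $h$ of $D$, with an inner induction on $|L|+|L'|$ that I shall need because the case analysis produces recursive calls on subvisits whose colour list has the \emph{same} length $h$ as $D$.

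The base case $h=0$ is immediate, since by definition $L = L' = \Ap{\lambda}$. For $h>0$, I unfold the definition to write $L = M * L_0 * \dots * L_{n-1}$ and $L' = M' * L'_0 * \dots * L'_{n'-1}$, where $M, M'$ are both $\Ap{d_1, \dots, d_{h-1}}$-visits from $\lambda$. The primary induction hypothesis, applied to this shorter colour list, tells me that $M$ and $M'$ are comparable by prefix; without loss of generality I take $M \preceq M'$.

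In Case 1, $M \prec M'$ strictly. Corollary \ref{lemma:complprefix} in its contrapositive form then gives that $M$ is not $\Ap{d_1, \dots, d_{h-1}}$-complete, and the completeness clause of the $D$-visit definition forces $n = 0$, so $L = M \preceq M' \preceq L'$. In Case 2, $M = M'$: if $n=0$ or $n'=0$ the conclusion is immediate, and otherwise $\head(L_0) = \head(L'_0)$, because both equal the first $d_0$-expansion of the common list $M$. Now $L_0$ and $L'_0$ are $\Ap{d_1, \dots, d_{h-1}, d_0}$-visits from the same node, and since $|L_0|+|L'_0|<|L|+|L'|$ the secondary induction hypothesis makes them comparable. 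I then walk along the pairs $L_j, L'_j$ by increasing $j$: as long as both are present and neither is the last in its list, the $D$-completeness of $L_j$ and $L'_j$ aligns the $(j+1)$-th $d_0$-expansion heads, so secondary induction again yields comparability. At some index one visit exhausts its expansions while the other does not, producing a prefix relation, or both terminate simultaneously, producing equality.

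The main obstacle is the inner recursion between $L_0$ and $L'_0$: their colour list has the same length $h$ as $D$, so the primary induction does not apply directly and it is essential to maintain a secondary induction on list size throughout. Once this is set up the rest is a structural unfolding of the definition, together with repeated invocations of Corollary \ref{lemma:complprefix} to rule out would-be mismatches by forcing the relevant truncation parameter to be $0$.
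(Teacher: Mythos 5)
Your proof is correct and follows essentially the same route as the paper's: primary induction on $h$, a secondary induction on the sizes of the two visits to handle the blocks $L_j$, $L'_j$ (whose colour list is a permutation of $D$ of the same length $h$), and repeated use of Corollary \ref{lemma:complprefix} to force equality of the $D$-complete blocks and a prefix relation at the first non-complete one. The only cosmetic slip is at the very end: when both visits exhaust their expansion blocks at the same index, the two last blocks are merely comparable by prefix rather than equal, so what you obtain there is comparability of $L$ and $L'$, not equality --- which is all the statement requires anyway.
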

\begin{proof}
By induction over $h$.  If $h=0$ then $L = L'=\Ap{\lambda}$, hence the thesis. 

If $h>0$, since $L$, $L'$ are $D$-visits from $\lambda$, we have $L=M*L_0*\dots*L_{m-1}$ and $L'=M'*L_0'*\dots*L_{n-1}'$ as in the definition. We may assume without loss of generality that $m \leq n$. 

If $m=n=0$, then $L= M $ and $L'= M'$. Hence, by induction hypothesis on $h-1$, $M$ and $M'$ are comparable by prefix. 

If $0= m < n$, then $M'$ is $\Ap{d_1,\dots, d_{h-1}}$-complete. By Corollary \ref{lemma:complprefix}, $L = M \leq  M' \leq L'$. 

If $0 < m \leq n$, we prove the thesis by secondary induction on $\max\bp{|L|, |L'|}$. By Corollary \ref{lemma:complprefix}, $M = M'$ since they are $\Ap{d_1,\dots, d_{h-1}}$-complete and comparable by prefix. Since the $d_0$-extensions of $M$ are the same, $\head(L_i)= \head(L_i')$  for all $i < m$.  Since $\lambda$ does not belong to $L_i$ and $L_i'$, we have $|L_i|<|L|$ and $L_i'|<|L'|$. Therefore $\max\bp{|L_i|, |L_i'|} < \max\bp{|L|, |L'|}$, and by secondary induction hypothesis we have $L_i$ and $L_i'$ are comparable by prefix for all $i < m$.
\begin{itemize}
\item If $i< m-1$ then $L_i$ and $L_i'$ are $D$-complete, therefore by Corollary \ref{lemma:complprefix}, $L_i=L_i'$.
\item Assume now that $i= m = n$. Then $L$ and $L'$ have the same prefix $M*L_0*\dots*L_{m-2} = M'* L_0'*\dots*L_{m-2}'$ and $L_{m-1}$, $L_{m-1}'$ are comparable by prefix. Thus $L$ and $L'$ are comparable by prefix. 
\item If $i= m < n$, $L'_{m-1}$ is $D$-complete. Therefore $L_{m-1} \leq L_{m-1}'$ by Corollary \ref{lemma:complprefix}. It follows: 
\[
	L= M*L_0*\dots*L_{m-2}* L_{m-1} \leq  M'* L_0'*\dots*L_{m-2}' * L_{m-1}' \leq L'. \qedhere
\]
\end{itemize}
\end{proof}

Given a $D$-visit $L$ which is not $D$-complete, we can extend it to another $D$-visit.

\begin{lemma}\label{lemma:infinite}
Let $D = \Ap{d_0, \dots, d_{h-1}} \subseteq C$, let $\lambda \in U$ and  let $L$ be a $D$-visit from $\lambda$.  Either $U_D(\lambda) = \bp{\eta \mid \eta \in L}$ and $L$ is $D$-complete, or $U_D(\lambda) \supset \bp{\eta \mid \eta \in L}$ and there exists $\mu \in U$ such that $L*\Ap{\mu}$ is a $D$-visit from $\lambda$.
\end{lemma}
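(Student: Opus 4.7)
The plan is to prove the stated dichotomy by the same principal induction on $h$ and secondary induction on $|L|$ that underlies the definition of $\Expl$. If $L$ is $D$-complete, Lemma \ref{lemma:subtreeD} immediately yields $U_D(\lambda)=\bp{\eta\mid\eta\in L}$, giving the first alternative. Otherwise I shall construct an explicit $\mu\in U$ such that $L*\Ap{\mu}$ is still a $D$-visit from $\lambda$; the strict containment $U_D(\lambda)\supset\bp{\eta\mid\eta\in L}$ then comes for free via Lemma \ref{lemma:subtree} applied to the missing $d$-child that witnesses the failure of $D$-completeness (such a child extends a node already in $L$ and uses a color in $D$, so it lies in $U_D(\lambda)$).

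The base case $h=0$ is immediate, because $L=\Ap{\lambda}$ is vacuously $D$-complete and $U_D(\lambda)=\bp{\lambda}$. For the inductive step I decompose $L=M*L_0*\dots*L_{n-1}$ as in the definition and, assuming $L$ is not $D$-complete, locate where the extension should be appended. Three sub-cases suffice. First, if $n\geq 1$ and $L_{n-1}$ is not $\Ap{d_1,\dots,d_{h-1},d_0}$-complete, the secondary IH---applicable since $|L_{n-1}|<|L|$---yields $\mu$ with $L_{n-1}*\Ap{\mu}$ a visit from $\head(L_{n-1})$, and then $L*\Ap{\mu}$ is the desired $D$-visit. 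Second, if $M$ is $\Ap{d_1,\dots,d_{h-1}}$-complete and every block $L_j$ with $j<n$ is $D$-complete (vacuously so when $n=0$), then the failure of $D$-completeness of $L$ must come from a $d_0$-child of some node of $M$ not yet in $L$; hence the $n$-th $d_0$-expansion $\mu$ of $M$ exists, and appending the new block $L_n=\Ap{\mu}$ does the job. Third, in the remaining case $n=0$ with $M$ not $\Ap{d_1,\dots,d_{h-1}}$-complete, the principal IH on $h-1$ applied to $M$ produces $\mu$ such that $M*\Ap{\mu}$ is a $\Ap{d_1,\dots,d_{h-1}}$-visit from $\lambda$, and hence also a $D$-visit with $n=0$ unchanged.

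The main obstacle, though essentially bookkeeping, is to verify that each of the three extensions satisfies every clause of the definition of $\Expl$: the head of any new block must be the correct $d_0$-expansion of $M$, the singleton $\Ap{\mu}$ must be recognized as a valid visit from $\mu$ (immediate, being the base case of its own definition), and the completeness condition on the blocks $L_j$ with $j<n-1$ must be preserved after re-indexing. In the case where a fresh block $L_n$ is added, the old $L_{n-1}$ is precisely the one already assumed $D$-complete, which is exactly what the re-indexed clause requires; the earlier $L_j$'s were already $D$-complete by hypothesis, and $M$ remains $\Ap{d_1,\dots,d_{h-1}}$-complete.
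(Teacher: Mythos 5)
Your proof is correct and follows essentially the same route as the paper: the same principal induction on $h$ and secondary induction on $|L|$, the same use of Lemma \ref{lemma:subtreeD} for the complete case, and the same three ways of extending an incomplete visit (extend the last block via the secondary induction hypothesis, open a new block at the next $d_0$-expansion of $M$, or extend $M$ itself via the induction hypothesis on $h-1$). If anything, your case split is slightly more thorough than the paper's, since your second case with $n\geq 1$ explicitly covers the situation where all existing blocks are $D$-complete but a further $d_0$-expansion of $M$ exists (the paper's $n>0$ branch passes over this by asserting that $L_{n-1}$ must be incomplete); the only thing you leave implicit is that deciding which case applies uses instances of $\Em(U)$, which the paper makes explicit.
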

\begin{proof}
By induction over $h$. If $L$ is $D$-complete, by Lemma \ref{lemma:subtreeD} we deduce $U_D(\lambda) =  \bp{\eta \mid \eta \in L}$. If $h =0$, then $L = \Ap{\lambda}$ is $D$-complete and $U_D(\lambda)=\bp{\lambda}$. 

Assume that $L$ is not $D$-complete and $h>0$. We argue by induction on $|L|$. By definition we have $L=M*L_0*\dots*L_{n-1}$. If $n=0$, by Lemma \ref{lemma:subtreeD}, we have $U_{\Ap{d_1, \dots, d_{h-1}}}(\lambda) \supseteq  \bp{\eta \mid \eta \in L}=\bp{\eta \mid \eta \in M }$.  By $\Em(U)$ we may decide whether
\[
	\exists i \in [1,d-1] \neg \Com(M,d_i) \vee \forall i \in [1,d-1] \Com(M,d_i)
\]
In the first case we have $U_{\Ap{d_1, \dots, d_{h-1}}}(\lambda) \supset  \bp{\eta \mid \eta \in L}=\bp{\eta \mid \eta \in M }$, hence can apply the induction hypothesis on $h-1$ for $M$ and we get the thesis. In the second case, $M$ is $\Ap{d_1, \dots, d_{h-1}}$-complete. Since $U_{\Ap{d_1, \dots, d_{h-1},d_0}}(\lambda) \supset \bp{\eta \mid \eta \in L} = \bp{\eta \mid \eta \in M}$ there exists at least one $d_0$-expansion of $L_0$. Choose the first $\mu$ such that $\Exp(\mu, M ,d_0)$.
Now assume that $n>0$. By definition of $D$-visit, for every $j < n-1$ we have $L_j$ is $D$-complete. By secondary induction hypothesis on $L_j$, for all $j <n-1$  we have $U_{\Ap{d_0, \dots, d_{h-1}}}({\head{L_{j}}})= \bp{\eta \mid \eta \in L_{j}}$. By $L$ not $D$-complete we have $L_{n-1}$ not $D$-complete, therefore $U_{\Ap{d_1, \dots, d_{h-1}, d_0}}({\head(L_{n-1})}) \supset\bp{\eta \mid \eta \in L_{n-1}}$. By secondary induction hypothesis on $L_{n-1}$ we get the thesis. 
\end{proof}

As a consequence of both Proposition \ref{proposition:prefix} and Lemma \ref{lemma:infinite}, if $U_D(\lambda)$ is infinite we may prove that any $D$-visit from $\lambda$ can be uniquely extended to another $D$-visit from $\lambda$.

\begin{theorem}\label{theorem:existenceuniqueness}
Let $D = \Ap{d_0, \dots, d_{h-1}} \subseteq C$ and $\lambda \in U$ be such that $U_D(\lambda)$ is infinite. For every $D$-visit $L$ from $\lambda$, there exists a unique $\mu$ such that $L *\Ap{\mu}$ is a $D$-visit from $\lambda$.
\end{theorem}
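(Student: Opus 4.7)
The plan is to derive the theorem directly from the two main results already established: Lemma~\ref{lemma:infinite} supplies existence, and Proposition~\ref{proposition:prefix} supplies uniqueness. The main content is just to exclude the ``already complete'' alternative in Lemma~\ref{lemma:infinite}.

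For existence, I would apply Lemma~\ref{lemma:infinite} to the given $D$-visit $L$ from $\lambda$. The lemma gives a dichotomy: either $L$ is $D$-complete and $U_D(\lambda) = \{\eta \mid \eta \in L\}$, or there exists $\mu \in U$ such that $L*\Ap{\mu}$ is a $D$-visit from $\lambda$. The first alternative is impossible under our hypothesis: $L$ is a finite list, so $\{\eta \mid \eta \in L\}$ is finite, whereas $U_D(\lambda)$ is assumed to be infinite. Hence the second alternative holds, which gives an extending $\mu$.

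For uniqueness, suppose $L*\Ap{\mu}$ and $L*\Ap{\mu'}$ are both $D$-visits from $\lambda$. By Proposition~\ref{proposition:prefix}, any two $D$-visits from $\lambda$ are comparable by prefix, so one of these two lists is a prefix of the other. Since they have the same length $|L|+1$, the prefix relation forces them to be equal, so $\mu = \mu'$. This is the unique extending node.

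I do not expect a real obstacle here: both existence and uniqueness are immediate corollaries of results proved in this subsection. The only subtle point is that we invoke the infiniteness of $U_D(\lambda)$ to rule out the $D$-complete case in Lemma~\ref{lemma:infinite}; the classical content ($\Em(U+1)$) has already been absorbed into that lemma, so no additional logical strength is required for the present statement.
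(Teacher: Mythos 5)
Your proof is correct and follows exactly the paper's own argument: Lemma~\ref{lemma:infinite} plus the infiniteness of $U_D(\lambda)$ for existence, and Proposition~\ref{proposition:prefix} for uniqueness. You merely spell out the two small steps (ruling out the $D$-complete alternative, and using equal lengths to conclude $\mu=\mu'$) that the paper leaves implicit.
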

\begin{proof}
Existence. Lemma \ref{lemma:infinite} and $U_D(\lambda)$ infinite yield the existence of $\mu$ for every given $L$ finite.  

Uniqueness. If  $L *\Ap{\mu_1}$ and $L*\Ap{\mu_2}$ are $D$-visits from $\lambda$, by Proposition \ref{proposition:prefix} they are comparable by prefix. Hence $\mu_1=\mu_2$.
\end{proof}

Theorem \ref{theorem:existenceuniqueness} guarantees that the following function, which we call \emph{complete $D$-visit of $U$} with abuse of notation, is well-defined:
\begin{definition}
Given $D = \Ap{d_0, \dots, d_{h-1}} \subseteq C$ and $\lambda \in U$ be such that $U_D(\lambda)$ is infinite, define $f_D^\lambda: \NN \to \List(C)$, the enumeration of the complete $D$-visit of $U$ from $\lambda$, as follows:
\begin{multline*}
	f_D^\lambda(m)= \mu \iff (m=0 \ \wedge\ \mu=\lambda) \ \vee \ (m>0 \ \wedge \\
	\exists \mu_0,\dots, \mu_{m-1}(\bigwedge_{i=0}^{m-1}(f_D^\lambda(i)=\mu_i) \wedge \Expl(\Ap{\mu_0, \dots, \mu_{m-1},\mu},D,\lambda))).
\end{multline*}
\end{definition}

Note that $f_D^\lambda$ is recursive in $U$. Indeed it may be defined by minimalization: if $U$ is infinite, by Theorem \ref{theorem:existenceuniqueness} there is always some unique new element to be added to the sequence. Thus $f_D^\lambda$ defines a recursive enumerable subtree of $U_D(\lambda)$ by Lemma \ref{lemma:subtreeD}. We suppose that there is an efficient algorithm enumerating the tree $f^\lambda_D(\NN)$, using stacks and pointers, but to design it is out of the scope of this paper. Notice that $f_C^{\Ap{}}$ is the strategy of the \vgt{artist}.

In order to prove that $T=f^{\Ap{}}_C{}'' \NN$ has a unique infinite branch which satisfies the Property (c), we define the following predicate.

\begin{definition}
Assume that $D = \Ap{d_0, \dots, d_{h-1}} \subseteq C$ and $\lambda \in U$ be such that $U_D(\lambda)$ is infinite. A node $\mu$ is \emph{stable} for $\lambda$ if all nodes after $\mu$ in $f_D^\lambda{}''\NN$ are descendants of $\mu$. 
\[
	\Stab(\mu) \iff \exists m \forall n > m \forall \eta ((f_D^\lambda(m) = \mu \wedge f_D^\lambda(n) = \eta) \implies \mu \prec \eta).
\]
\end{definition}

Our next goal is to show that the set of ancestors of stable nodes forms the unique infinite branch of $T$, when $T$ is infinite. The first step is to prove that stable nodes form a straight line in any infinite $D$-visit.

\begin{lemma}\label{lemma:stablebasic}
Assume that $D = \Ap{d_0, \dots, d_{h-1}} \subseteq C$ and $\lambda \in U$ be such that $U_D(\lambda)$ is infinite. 
\begin{enumerate}
\item If $\mu$ is stable, then $U_D(\mu)$ is infinite.
\item If $\mu_1$ and $\mu_2$ are stable, then they are comparable by prefix.
\end{enumerate}
\end{lemma}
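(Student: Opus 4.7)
The plan is to unfold the definition of stability and to combine the enumeration properties of $f_D^\lambda$ already established in this section. For part~(1), I would fix a witness $m$ (which, reading the definition sensibly, satisfies $f_D^\lambda(m)=\mu$) such that every later $f_D^\lambda(n)$, with $n>m$, is a proper descendant of $\mu$. By Lemma~\ref{lemma:subtree} applied to the finite initial segments of $f_D^\lambda$ produced via Theorem~\ref{theorem:existenceuniqueness}, the enumeration $f_D^\lambda$ is injective, so the set $\{f_D^\lambda(n)\mid n>m\}$ has infinitely many distinct elements. By Lemma~\ref{lemma:subtreeD} each such node $\nu$ lies in $U_D(\lambda)$, hence $\nu=\lambda*\eta$ with $\eta\in\List(D)$. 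Writing also $\mu=\lambda*\xi$ with $\xi\in\List(D)$, and using that $\xi$ is a proper prefix of $\eta$, we may split $\eta=\xi*\eta'$ with $\eta'\in\List(D)$; then $\nu=\mu*\eta'\in U_D(\mu)$. Thus $U_D(\mu)$ has infinitely many distinct elements, proving~(1).

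For part~(2), I would take witnesses $m_1,m_2$ for the stability of $\mu_1,\mu_2$, so that $f_D^\lambda(m_i)=\mu_i$ and $\mu_i\prec f_D^\lambda(n)$ for all $n>m_i$. By injectivity of $f_D^\lambda$ these indices are uniquely determined by $\mu_1,\mu_2$. Without loss of generality assume $m_1\leq m_2$. If $m_1=m_2$ then $\mu_1=f_D^\lambda(m_1)=f_D^\lambda(m_2)=\mu_2$, so the two nodes are trivially comparable by prefix. Otherwise $m_1<m_2$, and instantiating the stability condition for $\mu_1$ at $n=m_2$ yields $\mu_1\prec f_D^\lambda(m_2)=\mu_2$, so once again $\mu_1$ and $\mu_2$ are comparable by prefix.

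I do not anticipate a real obstacle in either part: the argument is essentially a routine unfolding of the stability predicate together with the injectivity and color-restriction supplied by Lemmas~\ref{lemma:subtree} and~\ref{lemma:subtreeD}. The mildly delicate point is that the witness $m$ of stability must be read as genuinely marking the position of $\mu$ in the enumeration (so that $f_D^\lambda(m)=\mu$); this is crucial in part~(2), where it allows us to transfer the ordering of indices $m_1<m_2$ into the prefix ordering $\mu_1\prec\mu_2$. No classical strength beyond the $\Em(U+1)$ already used to build $f_D^\lambda$ in Lemma~\ref{lemma:infinite} is needed here.
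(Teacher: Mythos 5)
Your proof is correct and follows essentially the same route as the paper's: part (1) uses injectivity of the enumeration (via Lemma~\ref{lemma:subtree}) to get infinitely many distinct descendants of $\mu$ inside $U_D(\mu)$, and part (2) transfers the ordering of the stability witnesses $m_1\leq m_2$ into the prefix relation. Your reading of the stability witness as satisfying $f_D^\lambda(m)=\mu$ matches the paper's own, and your extra detail in (1) on splitting $\eta=\xi*\eta'$ merely makes explicit a step the paper leaves implicit.
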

\begin{proof}
1. By definition of $\Stab(\mu)$, there exists $m$ such that $f_D^\lambda(m)=\mu$ and for every $n>m$, $\mu \prec f_D^\lambda(n)$. Therefore $f_D^\lambda{}''\bp{n \mid n>m} \subseteq U_D(\mu)$. Since all elements in $f_D^\lambda{}''\bp{n \mid n>m}$ are pairwise distinct by definition and Lemma \ref{lemma:subtree} we are done.
 
2. Let $m_1$ and $m_2$ be such that $f_D^\lambda(m_1)=\mu_1$ and  $f_D^\lambda(m_2)=\mu_2$. Without loss of generality we may assume that $m_1 \leq m_2$. If $m_1 < m_2$, $\Stab(\mu_1)$ yields $\mu_1 \prec \mu_2$.
\end{proof}

We can prove that there are infinitely many stable nodes in any infinite $D$-visit from $\lambda$. To this aim we first prove that if a visit contains some $d_0$-child, it contains some stable node which is a $d_0$-child.

\begin{lemma}\label{lemma:stable0}
Let $D = \Ap{d_0, \dots, d_{h-1}} \subseteq C$ and let $\lambda \in U$ such that $U_D(\lambda)$ is infinite. Assume that the visit contains two nodes, one the $d_0$-child of the other, then the visit contains some node $f^\lambda_D(n)$, with $n > 0$, which is stable and $d_0$-child of some other node of the visit.
\end{lemma}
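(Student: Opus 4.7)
The approach is to analyze the top-level structure of the complete $D$-visit $f_D^\lambda$ and to extract a stable $d_0$-child from the last block of its decomposition. Every finite initial segment of $f_D^\lambda$ is a $D$-visit from $\lambda$, so it admits a decomposition $M \conc L_0 \conc \dots \conc L_{n-1}$ in the sense of $\Expl$. My first step would be to show that these decompositions are coherent across growing initial segments, using Proposition~\ref{proposition:prefix} for prefix-comparability and Corollary~\ref{lemma:complprefix} for the maximality of $D$-complete visits, so that any completed block ($M$ when $n \geq 2$, or $L_j$ for $j < n-1$) is preserved when the visit is extended. In the limit this yields a well-defined decomposition $f_D^\lambda = M^{*} \conc L_0^{*} \conc L_1^{*} \conc \dots$, where $M^{*}$ uses only the colors $d_1,\dots,d_{h-1}$ and each $\head(L_j^{*})$ is the $j$-th $d_0$-expansion of $M^{*}$.

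Next, I would argue that only finitely many blocks exist. The hypothesis provides a $d_0$-edge in the visit, so at least one block $L_0^{*}$ occurs and $M^{*}$ is a proper finite prefix of $f_D^\lambda$. Since each node of $M^{*}$ has at most one $d_0$-child in $U$, the finite $M^{*}$ admits at most $|M^{*}|$ many $d_0$-expansions, bounding the number of blocks to some $N \geq 1$. Each intermediate $L_j^{*}$ with $j \leq N-2$ must be $D$-complete by the definition of $D$-visit; by Lemma~\ref{lemma:subtreeD} this means $L_j^{*} = U_D(\head(L_j^{*}))$, which is therefore finite. Since $U_D(\lambda)$ is infinite, Theorem~\ref{theorem:existenceuniqueness} makes $f_D^\lambda$ an infinite enumeration, and the finitude of $M^{*}, L_0^{*}, \dots, L_{N-2}^{*}$ forces the last block $L_{N-1}^{*}$ to be infinite.

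To conclude, I would take $n = |M^{*}| + |L_0^{*}| + \dots + |L_{N-2}^{*}|$, so that $f_D^\lambda(n) = \head(L_{N-1}^{*})$. Then $n > 0$ since $\lambda \in M^{*}$. By construction $\head(L_{N-1}^{*}) = \eta \conc \Ap{d_0}$ for some $\eta \in M^{*}$, which is another node of the visit, so $f_D^\lambda(n)$ is indeed a $d_0$-child of another visit node. For stability, any $f_D^\lambda(m)$ with $m > n$ lies in $L_{N-1}^{*}$; applying Lemma~\ref{lemma:subtree} inside $L_{N-1}^{*}$ (a visit from its head) shows $f_D^\lambda(m)$ is a descendant of $\head(L_{N-1}^{*})$, and the no-repetition clause of the same lemma makes the descent strict, giving $f_D^\lambda(n) \prec f_D^\lambda(m)$.

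The main obstacle will be making the limit decomposition $f_D^\lambda = M^{*} \conc L_0^{*} \conc L_1^{*} \conc \dots$ fully rigorous, since the predicate $\Expl$ is defined only for finite lists. The key auxiliary fact is that once a block is $D$-complete it must persist verbatim in every longer extension; this follows from Corollary~\ref{lemma:complprefix}, but requires some careful bookkeeping because the decomposition of a finite $D$-visit is not stated to be unique. Once this stabilization is established, the combinatorial core of the proof --- counting $d_0$-expansions of the finite $M^{*}$ and isolating the last, necessarily infinite, block --- is short.
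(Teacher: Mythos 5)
Your proof is correct and follows essentially the same route as the paper's: decompose the visit as $M \conc L_0 \conc \dots$, use the finiteness of the $\Ap{d_1,\dots,d_{h-1}}$-complete prefix $M$ to bound the number of $d_0$-expansion blocks, and observe that the head of the last block ever entered by the visit is the desired stable $d_0$-child. The only point worth adding is that the step you flag as the main obstacle (identifying the last block, i.e.\ the largest $i$ for which the $i$-th $d_0$-expansion of $M$ exists and is actually visited) is handled in the paper by finitely many explicit instances of $\Em(U+1)$, and your sketch should make that use of classical logic explicit since the lemma is to be proved in $\ha + \Em(U+1)$.
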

\begin{proof}
Define $L= \Ap{f_D^\lambda(0), \dots, f_D^\lambda(p)}$. Assume that the visit $L$ contains an edge in color $d_0$, then $L = M* L_0 * \dots *L_{m-1}$, for $m>0$. Hence, by definition, $M$ is $\Ap{d_1,\dots, d_{h-1}}$-complete. 
Assume that $|M| = l$. Let $t$ be the maximum such that the expansion number $t$ of $M$ exists, that is, such that $\Exp(\mu,t,M)$ for some $\mu$. We can prove such maximum exists by the following statements of $\Em(U+1)$ for $i < l$:
\[
	\exists \mu (\Exp(\mu, i ,M)) \vee \forall \mu (\neg \Exp(\mu, i ,M)).
\]
For all $i \leq t \leq l$, let $\mu_i$ be the expansion number $t$ of $M$, that is, the unique witness of $\Exp(\mu_i,i,M)$. Thus, by at most $l$-many instances of the following statement of $\Em(U+1)$
\[
	\exists n (f_D^\lambda(n)=\mu_i) \vee  \forall n (f_D^\lambda(n) \neq \mu_i)
\]
We find the greater $n$ such that $f_D^\lambda(n)$ is a $d_0$-expansion of $M$. Such node is stable by unfolding definitions. Hence $n$ and $f_D^\lambda(n)$ are the wished witnesses.
\end{proof}

By Lemma \ref{lemma:stable0}, as long as we find $d_0$-children we find stable nodes which are $d_0$-children. From this remark and induction over the number of colors we may prove:

\begin{proposition}\label{corollary:stable}
Let $D = \Ap{d_0, \dots, d_{h-1}} \subseteq C$ and let $\lambda \in U$ such that $U_D(\lambda)$ is infinite. If $f_D^\lambda(m)$ is stable for $\lambda$, there exists $n >m$ such that $f_D^\lambda(n)$ is stable for $\lambda$.
\end{proposition}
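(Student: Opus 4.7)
The plan is a double induction, primary on the number of colors $h=|D|$ and secondary on $m$. The primary base $h=1$ is immediate: $f_D^\lambda$ enumerates a single infinite chain in color $d_0$, every node of which is stable, so $n=m+1$ works.

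For the primary step $h>1$, decompose $f_D^\lambda=M*L_0*L_1*\cdots$ as in the definition of a $D$-visit, with $M$ the $\Ap{d_1,\dots,d_{h-1}}$-visit from $\lambda$ and each $L_j$ a $\Ap{d_1,\dots,d_{h-1},d_0}$-visit from the $j$-th $d_0$-expansion of $M$. If $M$ is infinite, then $f_D^\lambda$ coincides with this complete $\Ap{d_1,\dots,d_{h-1}}$-visit from $\lambda$, and the primary induction hypothesis (with $h-1$ colors) provides the desired $n>m$ directly. Otherwise $M$ is finite; the infinity of $U_D(\lambda)$ combined with Lemma \ref{lemma:subtreeD} forces the visit to contain $d_0$-edges, so Lemma \ref{lemma:stable0} produces a stable $d_0$-child $\nu=f_D^\lambda(n_\nu)$ whose position satisfies $n_\nu\geq|M|$. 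If $\mu\in M$, then $m<|M|\leq n_\nu$ and $n=n_\nu$ works.

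The delicate case is $\mu\in L_i$ for some $i$. The first step is to show that $L_i$ must be the last block in the decomposition and must be infinite. If some $L_{i+1}$ existed, its head $\xi_{i+1}*\Ap{d_0}$ with $\xi_{i+1}\in M$ would have to be a strict descendant of $\mu$ by stability, forcing $\mu$ to be a prefix of $\xi_{i+1}$; but $\mu\in L_i$ descends from $\xi_i*\Ap{d_0}$, so the path from $\lambda$ to $\mu$ uses a $d_0$-edge, whereas no path from $\lambda$ to a node of $M$ does, a contradiction. Finiteness of $L_i$ would render $f_D^\lambda$ finite, against Theorem \ref{theorem:existenceuniqueness}. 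Uniqueness in Theorem \ref{theorem:existenceuniqueness} then gives $L_i=f_{D'}^{\eta_i}$ with $D'=\Ap{d_1,\dots,d_{h-1},d_0}$, and $\mu$ remains stable inside this sub-visit because the nodes following $\mu$ in $L_i$ coincide with those following $\mu$ in $f_D^\lambda$.

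Although $|D'|=h$ blocks the primary induction, the position of $\mu$ within $L_i$ is $m'=m-n_\nu<m$, so the secondary induction on $m$ delivers a stable node of $L_i$ at position $>m'$, i.e., a stable node of $f_D^\lambda$ at position $>m$. The main obstacle is precisely this last case, in which the number of colors cannot be reduced upon recursion into $L_i$; the resolution is the secondary induction on $m$, which strictly decreases because every descent into a $d_0$-block moves the starting position forward by $n_\nu\geq|M|\geq 1$.
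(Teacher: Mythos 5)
Your proof is correct, and it isolates the same two key ingredients as the paper's own argument---reduction to $h-1$ colors when no $d_0$-edge occurs, and Lemma \ref{lemma:stable0} when one does---but the inductive scaffolding is genuinely different. The paper uses a single induction on $h$: by $\Em(U+1)$ it decides whether some $d_0$-edge occurs in the visit after position $m$; if not it invokes the induction hypothesis for $\Ap{d_1,\dots,d_{h-1}}$, and if so it applies Lemma \ref{lemma:stable0} to the visit of $U_D(f_D^\lambda(m))$, i.e.\ it restarts the visit at the stable node $f_D^\lambda(m)$ and reads off the witness there. That keeps the proof short but leaves implicit the identification of the tail of $f_D^\lambda$ beyond position $m$ with a visit rooted at $f_D^\lambda(m)$, whose priority list is in general a rotation of $D$ rather than $D$ itself. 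You instead stay rooted at $\lambda$, locate $\mu=f_D^\lambda(m)$ inside the block decomposition $M*L_0*\cdots$, show the block containing $\mu$ is the last and infinite one, identify it with $f_{D'}^{\head(L_i)}$ for $D'=\Ap{d_1,\dots,d_{h-1},d_0}$ via Theorem \ref{theorem:existenceuniqueness}, and recurse; since $|D'|=h$ you pay with a secondary induction on the position, which strictly decreases because each descent into a block advances the starting index by at least $|M|\geq 1$. In effect your lexicographic induction on $(h,m)$ makes explicit exactly the bookkeeping that the paper's restart step glosses over. Two small points to tidy: deciding whether $M$ is infinite (and which block contains $\mu$) also consumes instances of $\Em(U+1)$, which is worth flagging since the section tracks this classical cost; and you use a slightly sharper form of Lemma \ref{lemma:stable0} than is literally stated, namely that the stable $d_0$-child it produces is a $d_0$-expansion of $M$ and hence sits at position at least $|M|$---true by inspection of its proof, but it should be said.
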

\begin{proof}
By induction on $h$. If $h=0$ is trivial since the only node of $U_D(\lambda)$ is $\lambda$. 
Let $h > 0$. By $\Em(U+1)$ either there are two nodes of index greater or equal to $m$ in $f^\lambda_D(\NN)$, one the $d_0$-child of the other, or not:
\begin{align*}
	&\forall m_0 \geq m \forall m_1 > m_0 \forall \mu(f_D^\lambda(m_0)=\mu \wedge f_D^\lambda(m_1)\neq \mu * \Ap{d_0}) \vee \\
	&\exists m_0 \geq m \exists m_1 >m_0 \exists \mu (f_D^\lambda(m_0)=\mu \wedge f_D^\lambda(m_1)=\mu * \Ap{d_0}).
\end{align*}
The first case yields $U_{\Ap{d_1,\dots, d_{h-1}}}(f_D^\lambda(m))$ is infinite. Hence by inductive hypothesis on $h-1$, we get there exists $n > m$ such that $f_D^\lambda(n)$ is stable for $f_D^\lambda(m)$. Hence $f_D^\lambda(n)$ is stable also for $\lambda$.

Otherwise, given the witnesses of 
\[
	  \exists m_0 \geq m \exists m_1 \exists \mu (f_D^\lambda(m_0)=\mu \wedge f_D^\lambda(m_1)=\mu*\Ap{d_0}),
\]
we obtain some edge in color $d_0$ in $L$. Hence, by applying Lemma \ref{lemma:stable0} to $U_D(f_D^\lambda(m))$ which is infinite since $f_D^\lambda(m)$ is stable (Lemma \ref{lemma:stablebasic}.1), we have that $f_D^\lambda(n)$ is the wished witness, because $n > m$.
\end{proof}

From Lemma \ref{lemma:stable0} and Proposition \ref{corollary:stable} we will prove that any color occurring infinitely many times in a $D$-visit occurs infinitely many times between some node and some stable node. This is to say: given a color $d_i$, if there are infinitely many $d_i$-children in the $D$-visit from $\lambda$, then there are infinitely many $d_i$-children which are stable.  The proof is direct only for the color $d_0$, for a generic color $d_i$ we use induction over the color position in the list $D$.

\begin{proposition}\label{proposition:stable2}
Let $D = \Ap{d_0, \dots, d_{h-1}} \subseteq C$ and let $\lambda \in U$ such that $U_D(\lambda)$ is infinite. Let $i <h$. Assume that there are infinitely many nodes in the $D$-visit from $\lambda$ which are $d_i$-child of some other node of the visit. Then there are infinitely many stable nodes in the $D$-visit from $\lambda$ which are $d_i$-child of some other node of the visit.
\end{proposition}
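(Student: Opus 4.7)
The plan is to prove the statement by induction on $i$, the position of $d_i$ in the list $D$, as announced just before the proposition. In both cases one starts from the decomposition $f_D^\lambda = M * L_0 * L_1 * \dots$ guaranteed by Theorem \ref{theorem:existenceuniqueness} and uses $\Em(U+1)$ to split on whether $M$ is infinite or finite.

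For the inductive step $i \geq 1$, if $M$ is infinite then $f_D^\lambda = M$ is itself the complete $\langle d_1, \dots, d_{h-1}\rangle$-visit from $\lambda$; in that priority list $d_i$ occupies position $i-1$, so the induction hypothesis applied to $M$ directly delivers infinitely many stable $d_i$-children, which are automatically stable in $f_D^\lambda$. Otherwise $M$ is finite; since $U_D(\lambda)$ is infinite while a finite $M$ admits at most finitely many $d_0$-expansions in $U$, exactly one segment $L_{j^*}$ is infinite and is last. The earlier pieces $M, L_0, \dots, L_{j^*-1}$ are all finite and between them contain only finitely many $d_i$-children, so infinitely many must lie inside $L_{j^*}$. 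In the priority list $\langle d_1, \dots, d_{h-1}, d_0\rangle$ of $L_{j^*}$ the color $d_i$ again sits at position $i-1$ (since $i \geq 1$), so the induction hypothesis applied inside $L_{j^*}$ yields infinitely many stable $d_i$-children there; these are stable in $f_D^\lambda$ as well, because the tail of $f_D^\lambda$ past $\head(L_{j^*})$ coincides with $f_{\langle d_1, \dots, d_{h-1}, d_0\rangle}^{\head(L_{j^*})}$ by Theorem \ref{theorem:existenceuniqueness} and Lemma \ref{lemma:stablebasic}.

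For the base case $i = 0$ the induction hypothesis is unavailable, and one argues directly from Lemma \ref{lemma:stable0}. That lemma supplies a first stable $d_0$-child $\mu_1 = \head(L_{j^*})$; the visit past $\mu_1$ coincides with $f_{D'}^{\mu_1}$ for the cyclic shift $D' = \langle d_1, \dots, d_{h-1}, d_0\rangle$, which still contains infinitely many $d_0$-children since only finitely many occurred before $\mu_1$. One iterates by descending through the cyclic shifts of $D$: at each of the next $h-1$ successive levels one decides with $\Em(U+1)$ whether the subordinate $M$ is finite and, when it is, applies Lemma \ref{lemma:stable0} to the current sub-visit to locate a stable node of the current lowest-priority color; after $h$ such descents the priority list returns to $D$ and Lemma \ref{lemma:stable0} can be applied afresh to yield another stable $d_0$-child. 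Each $h$-step cycle thus produces one more stable $d_0$-child, and iterating yields infinitely many. The main obstacle is precisely this bookkeeping of the cyclic descent: one must track at every level which color occupies position $0$, verify with Lemma \ref{lemma:stablebasic}(1) that the sub-visit entered is still infinite, and handle the alternative in which the current $M$ is instead infinite (a visit with one fewer color) by a separate shorter-priority argument in the spirit of Proposition \ref{corollary:stable}.
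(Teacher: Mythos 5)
Your inductive step ($i\geq 1$) is essentially the paper's argument: decompose the visit, observe that either it never shifts priority (so it coincides with the $\Ap{d_1,\dots,d_{h-1}}$-visit) or its infinite tail is the complete $\Ap{d_1,\dots,d_{h-1},d_0}$-visit from the last stable $d_0$-expansion, note that in either list $d_i$ now sits at position $i-1$, and invoke the induction hypothesis on the position. Your variant of applying the primary induction hypothesis directly to the shorter list in the first alternative is a harmless (arguably cleaner) substitute for the paper's secondary induction on $h$, and your justification that stability in the sub-visit transfers to stability in $f_D^\lambda$ is correct.

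The divergence, and the gap, is in the base case $i=0$. The paper does not iterate Lemma \ref{lemma:stable0} through the cyclic shifts of $D$: it first invokes Proposition \ref{corollary:stable} to obtain, for every $q$, a stable node $f_D^\lambda(p)$ with $p>q$; by Lemma \ref{lemma:stablebasic}.1 the visit past $p$ is still infinite, by hypothesis it still contains an edge of color $d_0$, and a single further application of Lemma \ref{lemma:stable0} then yields a stable $d_0$-child of index greater than $q$. All of the ``bookkeeping of the cyclic descent'' that you flag as the main obstacle --- tracking which color occupies position $0$ at each level, checking that each sub-visit entered is infinite, and handling the alternative in which the subordinate $M$ is infinite --- is precisely the content of the already-proved Proposition \ref{corollary:stable}, so redoing it inline duplicates that proof; and, as written, you do not actually carry it out, you only name it. Your descent could be completed (after the first shift the position of $d_0$ in the current list strictly decreases at every step, whether the step is a cyclic shift of the list or a drop of its front color, so it returns to position $0$ in at most $h$ steps, at which point Lemma \ref{lemma:stable0} applies afresh), but since you leave this as an acknowledged obstacle rather than an argument, the base case is not proved in your text. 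Replacing it with the two-line appeal to Proposition \ref{corollary:stable} followed by Lemma \ref{lemma:stable0} closes the gap.
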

\begin{proof}

We prove, by induction over $i$, that for every $q$ there is some stable node $f_D^\lambda(m)$, with $m> q$, in the $D$-visit from $\lambda$ which is a $d_i$-child of some other node of the visit. 

Let $i=0$ and fix a natural number $q$ in order to prove that there exists $m> q$ such that $f_D^\lambda(m)$ is both stable and a $d_0$-child of some other node of the visit.  By Proposition \ref{corollary:stable} there is $p > q$ such that $f_D^\lambda(p)$ is stable. Hence $U_D(f_D^\lambda(p))$ is infinite by Lemma \ref{lemma:stablebasic}.1. Since there are  $p < m_0 < m_1$ such that $f_D^\lambda(m_1)$ is a $d_i$-child of $f_D^\lambda(m_0)$, by Lemma \ref{lemma:stable0} there exists $m > p > q$ such that $f_D^\lambda(m)$ is both stable and a $d_0$-child for some other node of the visit.

Assume that the thesis holds for $i$.  By secondary induction over $h$.  If $h=0$, then $U_D(\lambda)$ has a unique node $\lambda$, hence the thesis. 

Assume that $h>0$ and fix a natural number $q$. By Proposition \ref{corollary:stable} there is $p > q$ such that $f_D^\lambda(p)$ is stable. Hence $U_D(f_D^\lambda(p))$ is infinite by Lemma \ref{lemma:stablebasic}.1. Let  $p< m_0< m_1$ be such that $f_D^\lambda(m_1)$ is a $d_i$-child of $f_D^\lambda(m_0)$. If there are no $n > p$ such that $f_D^\lambda(n)$ is a $d_0$-child of some other node, then for every $n' > p$, $f_D^\lambda(n')= f_{\Ap{d_1,\dots, d_{h-1}}}^\lambda(n')$. Then $U_{\Ap{d_1,\dots, d_{h-1}}}(f_D^\lambda(p))$ is infinite and by induction hypothesis on $h-1$ we are done.

Otherwise, by Lemma \ref{lemma:stable0}, there exists some $n > p$ such that $f_D^\lambda(n)=\mu$ is both stable and a $d_0$-child of some other node of the visit. Hence by definition, for every $j$, $f_D^\lambda(n+j) = f_{\Ap{d_1,\dots,d_{h-1},d_0}}^\mu(j)$ and  $U_{\Ap{d_1,\dots,d_{h-1},d_0}}(\mu)$ is infinite. By hypothesis for every $n \leq n + q'$ there exist some $n+ q' < n+ q' + m_0 < n+ q' +m_1$ such that $f_{\Ap{d_1,\dots,d_{h-1},d_0}}^\mu(q' + m_1) = f_D^\lambda(n +q'+m_1) = f_D^\lambda(n +q' + m_0)*\Ap{d_i} = f_{\Ap{d_1,\dots,d_{h-1},d_0}}^\mu(q'+ m_0)*\Ap{d_i}$. The position of $d_i$ in the list $\Ap{d_1, \dots, d_{h-1},d_0}$ is $i-1$. Thus by inductive hypothesis on $i$ we get the thesis. 
\end{proof}

We can now show that $f_D^{\Ap{}}{}''\NN$ has a unique branch which reflects any infinite color. The entire construction requires only the sub-classical principle $\Em(U+1)$, Excluded Middle over predicates with one quantifier more than in the definition on $U$. From this remark we will show that we may prove Ramsey's Theorem for recursive colorings using only $\koenig$.

\begin{theorem}[$\ha + \Em(U+1)$]
Let $D = \Ap{d_0, \dots, d_{h-1}} \subseteq C$ and let $\lambda \in U$ such that $U_D(\lambda)$ is infinite. $f_D^\lambda{}''\NN$ has a unique infinite branch $r$ such that it satisfies Property {\normalfont (c)}.
\end{theorem}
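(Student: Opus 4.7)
My plan is to take the unique infinite branch $r$ to be the downward closure of the set of stable nodes for $\lambda$, and then read each of the three requirements off the lemmas already established.

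First I would verify that the set of stable nodes $S = \bp{\mu \in f_D^\lambda{}''\NN \mid \Stab(\mu)}$ is an unbounded chain under $\prec$. Unboundedness follows from iterating Proposition \ref{corollary:stable}, once we notice that $\lambda = f_D^\lambda(0)$ is itself stable for $\lambda$ (indeed, by Lemma \ref{lemma:subtree}, every later entry in the enumeration is a proper descendant of $\lambda$). Linearity by $\prec$ is exactly Lemma \ref{lemma:stablebasic}.2. Consequently
\[
	r = \bp{\eta \mid \exists \mu \in S,\ \eta \preceq \mu}
\]
is an infinite branch of $f_D^\lambda{}''\NN$, and this is the branch I would propose.

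Next I would prove uniqueness. Given any infinite branch $r'$ of $f_D^\lambda{}''\NN$, the goal is to show that every stable $\mu$ lies on $r'$, for then $r \subseteq r'$, and since both are prefix-chains in the same tree, $r = r'$. Fix $\mu \in S$ enumerated at position $m$, i.e.\ $f_D^\lambda(m) = \mu$. Since $r'$ is infinite but only finitely many enumeration indices are $\leq m$, $r'$ contains some $\eta = f_D^\lambda(n)$ with $n > m$ and $|\eta| \geq |\mu|$; stability of $\mu$ forces $\mu \prec \eta$, so $\mu$ is the unique ancestor of $\eta$ at depth $|\mu|$ on $r'$, and hence $\mu \in r'$.

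Finally, for Property (c), suppose some color $d_i$ (with $i < h$) occurs on infinitely many edges of $f_D^\lambda{}''\NN$. Proposition \ref{proposition:stable2} delivers infinitely many stable nodes of the form $\nu * \Ap{d_i}$; each such node lies on $r$ by construction, and its parent $\nu$, as an ancestor of a stable node, also lies on $r$, so each pair contributes a $d_i$-coloured edge of $r$. The step I expect to require most care is the uniqueness argument, because it is not a question about subtree structure alone: one must use the enumeration order to force any alternative infinite branch to pass through every stable node rather than to bifurcate below one of them.
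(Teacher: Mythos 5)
Your proposal is correct and follows essentially the same route as the paper: take $r$ to be the downward closure of the stable nodes, get linearity from Lemma \ref{lemma:stablebasic}.2 and unboundedness from Proposition \ref{corollary:stable}, derive uniqueness by forcing any infinite branch to pass through every stable node via the enumeration order, and read Property (c) off Proposition \ref{proposition:stable2}. The only presentational difference is that the paper proves uniqueness by showing two putative infinite branches agree at each height below a common stable node, whereas you show any infinite branch contains all stable nodes and hence contains $r$; these are the same idea.
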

\begin{proof}
Existence. Define $r$ as the closure of the set of all stable nodes. Namely
\[
	\mu \in r \iff \exists \eta (\mu \preceq \eta \wedge \Stab(\eta))
\]
By Lemma \ref{lemma:stablebasic}.2, $r$ is linearly ordered. By Proposition \ref{corollary:stable} we have that $r$ is infinite.

Uniqueness. 
Assume that $r_1$ and $r_2$ are infinite branches and, for every $k \in \NN$, denote by $r_{i}(k)$ the node of height $k$ in $r_i$. We claim that for every natural number $k$, $r_1(k)=r_2(k)$. Fix $k \in \NN$ and $i \in \bp{1,2}$. By Proposition $\ref{corollary:stable}$ there exists $n$ greater than the indexes of $r_1(k)$ and $r_2(k)$ such that $f_D^\lambda(n)$ is a stable. Since $r_i(k)$ has infinitely many proper descendants (which are pairwise distinct by Lemma \ref{lemma:subtree}), there exists $n_i > n$ such that $r_i(k) \prec f_D^\lambda(n_i)$. Moreover by $\Stab(f_D^\lambda(n))$ we have $f_D^\lambda(n) \prec f_D^\lambda(n_i)$. Hence $r_i(k)$ and $f_D^\lambda(n)$ are comparable by prefix and distinct. By Lemma \ref{lemma:subtree} there are no repetition in $f_D^\lambda(\NN)$, therefore $r_i(k) \prec f_D^\lambda(n)$.  Thus $r_1(k)$ and $r_2(k)$ are ancestors of $f_D^\lambda(n)$ of the same height. Hence $r_1(k) = r_2(k)$.

$r$ satisfies Property (c). By Proposition \ref{proposition:stable2}, if there are infinitely many edges in color $d_i$, then we can find infinitely many stable nodes which are $d_i$-children. Hence we have infinitely many nodes in $d_i$ which are $d_i$-children.
\end{proof}

Note that the branch $r$ defined in the proof above is $\Delta^0_2(U)$. In fact $r$ contains all the nodes with infinitely many descendants. 
\[
	\mu \in r \iff \forall m \exists n > m \exists \eta (f_D^\lambda(n)=\eta \wedge \mu \prec \eta ).
\]


\section{From omniscience to homogeneous sets}\label{section:3LLPOimpliesRT2n}

Let $k \geq 2$ be a fixed natural number. We modify Erd\H{o}s-Rado's proof of $\rt^2_k$ (see e.g. \cite{KohlKreuz1}) to obtain a proof of $\koenig \implies  \rt^2_k(\Sigma^0_0)$ over $\ha$. It is enough to prove that if $\{c_a \mid a \in \NN\}$ is a recursive family of recursive colorings, a finite number of statements in $\koenig$ imply that there are predicates $C_0(.,c), \dots, C_{k-1}(.,c)$ such that, 
\[\forall a ( \bigvee \bp{ C_i(., c_a) \mbox{ is infinite and homogeneous }\mid i < k}).\]

We first sketch Erd\H{o}s-Rado's proof of $\rt^2_k$. It consists in defining a suitable infinite $k$-ary tree $V$. We first remark that $\rt^1_k$ (Ramsey's Theorem for colors and points of $\NN$) is nothing but the Pigeonhole Principle: indeed, if we have a partition of $\NN$ into $k$-many colors, then one of these classes is infinite. We informally prove now $\rt^2_k$ from $\rt^1_k$. Fix any coloring $f:[\NN]^2 \rightarrow k$ of all edges of the complete graph having support $\NN$. If $X$ is any subset of $\NN$, we say that $X$ defines a $1$-coloring of $X$ if for all $x \in X$, any two edges from $x$ to some $y$, $z$ in $X$ have the same color. If $X$ is infinite and defines a $1$-coloring, then, by applying $\rt^1_k$ to $X$ we produce an infinite subset $Y$ of $X$ whose points all have the same color $h$. According to the way we color points, all edges from all points of $X$ all have the color $h$. Thus, a sufficient condition for $\rt^2_k$ is the existence of an infinite set defining a $1$-coloring. In fact we need even less. Assume that $V$ is a graph whose ancestor relation is included in the complete graph $\NN$. We say that $V$ is an Erd\H{o}s' tree in $k$ colors (e.g. \cite[Definition 6.3]{Hclosure}) if for all $x \in V$, all $i=1, \dots, k$ all descendants $y$, $z$ of the child number $i$ of $x$ in $V$, the edges $x$ to $y$, $z$ have the same color number $i$. There is some Erd\H{o}s' tree recursively enumerable in the coloring (e.g. \cite{KohlKreuz1, RT22iff3LLPO}). Assume there exists some infinite $k$-ary Erd\H{o}s' tree $V$. Then $V$ has some infinite branch $r$ by K\"{o}nig's Lemma. $r$ is a total order in $V$, therefore $r$ is a complete sub-graph of $\NN$. Thus, $r$ defines an infinite $1$-coloring and proves $\rt^2_k$. Therefore a sufficient condition for $\rt^2_k$ is the existence of an infinite $k$-ary tree Erd\H{o}s' tree $V$. 

In \cite{Jockusch} Jockusch presented a modified version of Erd\H{o}s-Rado proof. Erd\H{o}s-Rado's proof, Jockusch's proof and our proof differ in the definition of $V$, although until this point they are the same. Erd\H{o}s and Rado introduce an ordering relation $\prec_E$ on $\NN$ which defines the proper ancestor relation of a $k$-ary tree $E$ on $\NN$. The $k$-coloring on edges of $\NN$, restricted to the set of pairs $x \prec_E y$, gives the same color to any two edges $x \prec_E y$ and $x \prec_E z$ with the same origin $x$. This defines an Erd\H{o}s' tree over $\NN$. In both Erd\H{o}s-Rado and Jockusch's proofs, an infinite homogeneous set is obtained from an infinite set of nodes of the same color in an infinite branch of the tree. In Erd\H{o}s-Rado and Jockusch's proofs, the Pigeonhole Principle is applied to a $\Delta^0_3$-branch obtained by K\"{o}nig's Lemma. To formalize these proofs in $\ha$ we would have to use the classical principle $\llpo{4}$: the Pigeonhole Principle for $\Delta^0_3$ predicates requires $\llpo{4}$. Our goal is to prove $\rt^2_k(\Sigma^0_0)$ using the weaker principle $\koenig$.

\begin{proposition}[$\ha + \Em_2$]\label{proposition: Erdossubtree}
For every $k \geq 2$ and for every recursive coloring $c_a: [\NN]^2 \to k$, there exists an Erd\H{o}s' tree $T$ for the coloring $c_a$ which satisfies the following properties:
\begin{itemize}
\item[{\normalfont(a)}] there exists some $\Delta^0_2$ predicate in $\ha$ which represents $x \in T$; 
\item[{\normalfont(b)}] $T$ has a unique infinite branch $r$ defined by some predicate of $\ha$;
\item[{\normalfont(c)}] if there exist infinitely many edges with color $h$ in $T$, then there are infinitely many edges with color $h$ in $r$.
\end{itemize}
\end{proposition}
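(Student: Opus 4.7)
The plan is to reduce to Theorem \ref{theorem:k+1} of Section \ref{section: T2k} by constructing an explicit Erd\H{o}s' tree $V$ recursive in $c_a$ and then applying that theorem with $U = V$ and color order $C = \Ap{0, 1, \dots, k-1}$. First I would define $V$ by the standard inductive construction for a recursive coloring, as in \cite{KohlKreuz1, RT22iff3LLPO}: the root is $0$; to place $n > 0$ we descend from the root, at each current node $x$ setting $i = c_a(\bp{x,n})$ and either descending into the color-$i$-child of $x$ if already present, or else making $n$ the color-$i$-child of $x$ and stopping. The placement is effective and strictly increases depth, so $V$ is a recursive, infinite, $k$-ary tree; identifying each node with its sequence of colors from the root puts $V$ in the format of Section \ref{section: T2k}. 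Moreover $V$ is Erd\H{o}s': any descendant $y$ of the color-$i$-child of $x$ in $V$ was placed through $x$ in that subtree, forcing $c_a(\bp{x,y}) = i$.

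Next I would apply Theorem \ref{theorem:k+1} to $U = V$ to obtain the subtree $T = f_C''\NN$. Because $V$ is recursive, $\Em(V + 1) \subseteq \Em_2$, so the construction fits in $\ha + \Em_2$. Point (2) of that theorem gives $T$ infinite, and point (3) the three properties of Section \ref{section: T2k} relative to $V$. Proposition property (a) follows since $T$ is $\Sigma^0_1$ in a recursive predicate, hence $\Sigma^0_1 \subseteq \Delta^0_2$; property (b) follows from the theorem together with the remark closing Section \ref{section: T2k} providing the $\Delta^0_2$ characterization $\mu \in r \iff \forall m \exists n > m \exists \eta (f_C(n) = \eta \wedge \mu \prec \eta)$ of the unique infinite branch; property (c) is immediate. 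Finally $T$ is itself Erd\H{o}s': by Lemma \ref{lemma:subtree} its ancestor relation is inherited from $V$, so any descendant $y \in T$ of the color-$i$-child of $x \in T$ satisfies $c_a(\bp{x,y}) = i$ by the Erd\H{o}s' property of $V$.

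The main obstacle is complexity bookkeeping rather than a new combinatorial idea: the combinatorial work has been done in Section \ref{section: T2k}, and since $V$ is literally recursive the classical-logic cost stays within $\Em_2$. A minor subtlety worth double-checking is that the infinite branch $r$, defined via the closure of stable nodes, really admits the claimed $\Delta^0_2$ representation uniformly in $c_a$, so that the whole conclusion may indeed be formalized in $\ha + \Em_2$; this is precisely the remark at the end of Section \ref{section: T2k}.
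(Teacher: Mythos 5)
Your proposal is correct and follows essentially the same route as the paper: build the standard Erd\H{o}s' tree recursively from $c_a$, recode its nodes as color sequences so as to fit the format of Section \ref{section: T2k}, and apply Theorem \ref{theorem:k+1}, reading off (a), (b), (c) from the theorem and the closing remark of that section. The only nuance is that the tree of color sequences is $\Sigma^0_1$ (one must existentially quantify over the natural numbers realizing a given color path) rather than recursive, but this still gives $\Em(U+1)=\Em_2$ and leaves your complexity bookkeeping, like the paper's, within the stated bounds.
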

\begin{proof}
Let $k\geq 2$. The standard Erd\H{o}s' tree $(\NN, \prec_E)$ associated to a coloring $c_a : [\NN]^2 \to k$ is defined as a graph, as the set of natural numbers equipped with the following relation.
\[
		x \prec_E y \iff \forall z < x (z \prec_E x \implies c_a(\bp{z,x})=c_a(\bp{z,y})).
\]
$(\NN, \prec_E)$ is recursively enumerable on the coloring, and recursive enumerable if the coloring is recursive. We would like to apply Theorem \ref{theorem:k+1} to produce an infinite branch $r$ as required, but Theorem \ref{theorem:k+1} requires a tree given as set of branches. Thus, we have to prove in $\ha$ that given a graph-tree $(N,\prec_E)$ we can extract a tree $(\tilde{E}, \prec)$ where $\tilde{E} \subset \List(k)$ which keeps all information we need. We define $(a_0, \dots, a_j) \in \tilde{E}$ if and only if there are nodes $x_0, \dots, x_{j+1} \in \NN$ such that for every $i \in j+1$ $c(x_i, x_{i+1})=a_j$ and $x_{i+1}$ is a $\prec_E$-child of $x_i$. $(\NN, \prec_E)$ contains the value of each node while the tree $(\tilde{E}, \prec)$ contains only the color of each edge, but note that given both $(\NN, \prec_E)$ and $(\tilde{E}, \prec)$, we can recursively translate any subtree of $(\tilde{E}, \prec)$ in a subtree of $(\NN, \prec_E)$. 

By applying Theorem \ref{theorem:k+1} to the $k$-ary tree $(\tilde{E}, \prec)$, the subtree $T$ of $(\NN, \prec_E)$ which corresponds to $f_C''\NN$ is $\Delta^0_2$ and has exactly one infinite branch, the rightmost. 
\end{proof}

Let $T$ be the witness of Proposition \ref{proposition: Erdossubtree}. We may prove that there are infinitely many nodes of the same color in the infinite branch of $T$ using only $\koenig$. Any infinite subset of the infinite branch of $T$ with all nodes in the same color will be some monochromatic set for the original graph. Moreover our proof recursively defines $k$-many monochromatic $\Delta^0_3$-sets, one of each color, that can not be all finite, even if we can not decide which of these is the infinite one.


\begin{theorem}\label{KoenigRamsey}
Let $k \geq 2$. Then $\koenig$ implies  $\rt^2_k(\Sigma^0_0)$ in $\ha$.
\end{theorem}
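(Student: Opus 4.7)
The plan is to combine Proposition \ref{proposition: Erdossubtree} with an iterated application of the Pigeonhole Principle for $\Sigma^0_1$-predicates (derivable from $\koenig$), so as to obtain the $k$-fold disjunctive conclusion of Ramsey's schema.

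Fix the parameter $a$ and apply Proposition \ref{proposition: Erdossubtree} to the recursive coloring $c_a:[\NN]^2\to k$ to obtain an Erd\H{o}s' tree $T$ with its unique infinite branch $r$ enjoying the reflection property (c). For each $i<k$, let $C_i(\cdot,c_a)$ be the set of those nodes $x$ on $r$ for which the edge in $r$ from $x$ to its immediate successor along $r$ has color $i$. The $C_i$'s are arithmetic. By the defining property of an Erd\H{o}s' tree, whenever $x,y\in C_i$ with $x$ preceding $y$ along $r$ the node $y$ is a $\prec_E$-descendant of the color-$i$-child of $x$ (which is precisely the successor of $x$ on $r$), and so $c_a(\{x,y\})=i$. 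Hence each $C_i$ is automatically homogeneous for $c_a$ in color $i$.

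What remains is to establish the disjunction $\bigvee_{i<k}(C_i\text{ is infinite})$. By property (c), "$C_i$ is infinite" is logically equivalent to "$T$ has infinitely many edges of color $i$". Since $T$ is enumerated by a function recursive in $c_a$, the predicate "$\mu\in T$ is a node with incoming edge of color $i$" is $\Sigma^0_1$, and the statement
\[
\forall m\,\exists\mu\in T\,\bigl(|\mu|>m \ \wedge\ \mu\text{ has incoming color }i\bigr)
\]
is $\Pi^0_2$. Since $T$ is infinite and every non-root node of $T$ carries exactly one of $k$ colors, the premise $\forall m\,\exists\mu\in T\,(|\mu|>m \ \wedge \bigvee_{i<k}(\mu\text{ has incoming color }i))$ is immediate in $\ha$. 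Iterating the binary Pigeonhole Principle for $\Sigma^0_1$-predicates $k-1$ times (successively splitting the set of $k$ colors in halves) transforms this into the desired $k$-fold disjunction. Binary Pigeonhole for $\Sigma^0_1$ is equivalent over $\ha$ to $\llpo{2}$, and is therefore derivable from $\koenig=\llpo{3}$.

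The main delicacy is not in the pigeonhole step but in the preceding appeal to Proposition \ref{proposition: Erdossubtree}: its proof was phrased in $\ha+\Em_2$, so one must verify that the instances actually invoked (decisions about the $\Sigma^0_1$-tree $\tilde E$ built from the recursive Erd\H{o}s' tree $(\NN,\prec_E)$) can be carried out from $\koenig$ alone in $\ha$. These uses of excluded middle concern only the existence or non-existence of finite expansions of the visit $f^{\Ap{}}_C$, and can be absorbed into $\koenig$-provable facts since $\tilde E$ and $(\NN,\prec_E)$ are $\Sigma^0_1$. Once that routine check is made, composing the two steps yields $\koenig\vdash\rt^2_k(\Sigma^0_0)$ in $\ha$.
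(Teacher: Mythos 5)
Your overall architecture is the paper's: build the Erd\H{o}s' tree, pass to the special subtree $T$ of Proposition \ref{proposition: Erdossubtree} with its unique branch $r$, define $C_i$ from the colors along $r$, and use a pigeonhole argument to get the $k$-fold disjunction. The homogeneity argument for the $C_i$ and the reduction of \vgt{$C_i$ is infinite} to \vgt{$T$ has infinitely many color-$i$ edges} via property (c) are both fine. But there is a genuine error in the complexity count at the pigeonhole step. You claim that \vgt{$\mu \in T$ has incoming color $i$} is $\Sigma^0_1$ because $T$ is enumerated by a function recursive in $c_a$. It is not: the enumeration $f_C$ is recursive in the tree $\tilde E$ \emph{as an oracle}, and $\tilde E$ is only recursively enumerable (its definition quantifies existentially over the witnesses $x_0,\dots,x_{j+1}$). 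The construction of the $D$-visit must answer \emph{negative} queries about $\tilde E$ --- to test $D$-completeness of a visited portion and to identify the $n$-th $d_0$-expansion one must know that certain nodes are \emph{not} in $\tilde E$ --- and these queries are $\Pi^0_1$. This is exactly why Proposition \ref{proposition: Erdossubtree}(a) asserts only that $x \in T$ is $\Delta^0_2$, not $\Sigma^0_1$. Consequently \vgt{$T$ has infinitely many edges of color $i$} has the form $\forall x \exists z (z \geq x \wedge P(z))$ with $P \in \Sigma^0_2$, and the instance of the Pigeonhole Principle you need is the one for $\Sigma^0_2$ predicates, which the paper derives from $\koenig$ (this derivation is the content of the Claim in its proof); your binary Pigeonhole for $\Sigma^0_1$, i.e.\ $\llpo{2}$, does not reach it.

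A sanity check confirms the step cannot be repaired at the $\Sigma^0_1$ level: your argument, if correct, would prove $\rt^2_k(\Sigma^0_0)$ from $\Em_2$ together with $\llpo{2}$, and since $\Em_2$ already implies $\llpo{2}$, from $\Em_2$ alone. Combined with the converse implication $\ramsey(\Sigma^0_0) \implies \koenig$ from \cite{RT22iff3LLPO} (cited in the introduction), this would give $\Em_2 \implies \llpo{3}$, contradicting the separation of these principles: as recalled in the conclusion, $\llpo{3}$ is equivalent to $\Em_2$ \emph{plus} $\demorgantre$, and the De Morgan component is precisely what the $\Sigma^0_2$ pigeonhole step consumes. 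Your closing remark that the $\Em_2$ uses inside the tree construction are absorbed by $\koenig$ is correct in spirit, but the load-bearing use of $\koenig$ is the one you tried to avoid.
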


\begin{proof}

Given $T$ the witness of Proposition \ref{proposition: Erdossubtree}, we can prove Ramsey's Theorem for pairs and $k$-many colors in $\llpo{3}$.  We have to prove that the infinite branch of $T$ (which exists and it is unique by Proposition \ref{proposition: Erdossubtree}.b) has infinitely many pairs $x \prec_T y$ of color $h$. By Proposition \ref{proposition: Erdossubtree}.c, it is enough to prove that $T$ has infinitely many pairs $x \prec_T y$ of color $c$, for some $h$. By Proposition \ref{proposition: Erdossubtree}.a, $x \in T$ is a $\Delta^0_2$ predicate. Thus, if we apply the Pigeonhole Principle for $\Sigma^0_2$ predicates $(k-1)$-many times, we deduce that $T$ has infinitely many edges in color $h$ for some $h \in k$. However, the Pigeonhole Principle for $\Sigma^0_2$ predicates is a classical principle, therefore we have to derive the particular instance we use from $\koenig$.

\begin{claim}\label{principiocassetti}
$\koenig$ implies the Pigeonhole Principle for $\Sigma^0_2$.
\end{claim}

\begin{proof}[Proof Claim \ref{principiocassetti}.]
The Infinite Pigeonhole Principle for $\Sigma^0_2$ predicates can be stated as follows:
\[ \forall x \ \exists z \ [z \geq x \ \wedge \ (P(z,a) \vee Q(z,a))]\]
\[\implies \forall x \ \exists z \ [z\geq x \ \wedge \ P(z,a)] \ \vee \ \forall x \ \exists z [z \geq x \wedge Q(z,a)],\]
with $P$ and $Q$ $\Sigma^0_2$ predicates.
We prove that the formula above is equivalent in $\ha$ to some formula of $\koenig$. Let
\[
\begin{aligned}
H(x,a) &:= \exists z \ [z \geq x \wedge P(z,a)]\\
K(x,a) &:= \exists z \ [z \geq x \wedge Q(z,a)].
\end{aligned}
\]
In fact both  $H$ and $K$ are equivalent in $\ha$ to $\Sigma^0_2$ formulas $H', \ K'$. By intuitionistic prenex properties (see \cite{LICS04})
\[\exists z [z \geq x \ \wedge \ (P(z,a) \vee Q(z,a))]\]
is equivalent to
\[\exists z [z \geq x \ \wedge \ P(z,a)] \ \vee \ \exists z [z \geq x \ \wedge \ Q(z,a)].\]
The formula above is equivalent to $H' \vee K'$. Thus, any formula of Pigeonhole Principle with $P$, $Q$ $\Sigma^0_2$ is equivalent in $\ha$ to 
\[
\forall x (H'(x,a) \vee K'(x,a)) \implies \forall x H'(x,a) \vee \forall x K'(x,a),
\] 
which is the instance of $\koenig$ with $H', \ K'$. 
\end{proof}

Thus, there exist infinitely many edges of $r$ in color $h$, for some $h \in  k$. Their smaller nodes define a monochromatic set for the original graph, since given an infinite branch $r$ of an Erd\H{o}s' tree and $x \in r$, if there exists $y \in r$ such that $x \prec_T y$ and $\{x,y\}$ has color $h$, then for every $z \in r$ such that $x \prec_T z$, the edge $\{x,z\}$ has color $h$. Thus we can devise a coloring on $r$, given color $h$ to $x$ if $\{x,y\}$ has color $h$, with $y$ child of $x$ in $r$. After that, every infinite set of points with the same color  in $r$ defines an infinite set with all edges of the same color, and then it proves Ramsey's Theorem for pairs in $k$-many colors in $\ha$ starting from the assumption of $\koenig$.
\end{proof}

\section{Conclusion}

\paragraph{The complexity of the homogeneous sets.}
The infinite branch $r$ of the tree $T$ defined in Section \ref{section:3LLPOimpliesRT2n} is $\Delta^0_3$. As remarked in \cite{RT22iff3LLPO}, for some recursively enumerable tree $T$, the branch $r$ cannot be $\Delta^0_2$. Here we argue classically for short. Suppose by contradiction that $r$ is $\Delta^0_2$. In this hypothesis we will prove that for each recursive coloring there exists an infinite homogeneous set $\Delta^0_2$. Indeed, using the fact that all edges from the same point of $r$ to another point of $r$ have the same color, we may describe the homogeneous set of color $c$ as the set of points whose edges to any other point of $r$ all have color $h$:
\[\Omogeneo(y) \iff y \in r \wedge \forall z> y( \RamoInfinito(z) \implies c(\{y,z\})=h)\]
and also as the set of points having some edge to another point of $r$ of color $c$:
\[\Omogeneo(y) \iff y \in r \wedge \exists z > y( \RamoInfinito(z) \wedge c(\{y,z\})=h).\]
Therefore, if $r$ is $\Delta^0_2$ then the first formula is $\Pi^0_2$ and the second one is $\Sigma^0_2$. So for every $h$ the homogeneous set is $\Delta^0_2$. Since at least one of these sets is infinite and since Jockusch proved that exists a coloring of $[\NN]^2$ that has no infinite homogeneous set $\Sigma^0_2$ \cite{Jockusch}, we obtain a contradiction. So $r \not\in \Delta^0_2$ in general.

In Jockusch's proof he shows that one of the homogeneous sets (the red one in his notation) is $\Pi^0_2$, since at the beginning of each step he looks for red edges; while the other one are $\Delta^0_3$. In our proof we can see that all homogeneous sets are $\Delta^0_3$, since our construction is more symmetric with respect to the $k$-many colors. As a matter of fact, since $r$ is $\Delta^0_3$, the previous two formulas are respectively $\Pi^0_3$ and $\Sigma^0_3$. This is enough in order to prove that all homogeneous sets are $\Delta^0_3$. There always is an infinite homogeneous set $\Pi^0_2$, but apparently the proof is purely classical and cannot compute the integer code of such $\Pi^0_2$ predicate. Again we refer to Jockusch \cite{Jockusch} for details.

\paragraph{More about $\koenig$.}
$\koenig$ is a principle of uncommon use, but it is equivalent to K\"{o}nig's Lemma, given function variables and choice axiom \cite{LICS04}.  As shown in \cite{RT22iff3LLPO} $\llpo{n}$ is equivalent to the union of $\demorgan$ and  $\Em_{n-1}$, where
\[
	\demorgan:= \  \neg (P \wedge Q) \implies \neg P \vee \neg Q. \ (P, Q \in \Sigma^0_n)
\]

This equivalence helps us to analyse the proof of Theorem \ref{KoenigRamsey}. Indeed we can see that the most of the proof (namely Section \ref{section: T2k}) uses only $\Em_2$ and that $\demorgantre$ (and so $\koenig$) is used only to yield the Pigeonhole Principle for $\Sigma^0_2$ predicates at the end of the proof of Theorem \ref{KoenigRamsey}.

\paragraph{Further works.}
The first question that raises after this work is what is the minimal classical principle that implies $\rt^2_k(\Sigma^0_n)$, Ramsey's Theorem for pairs in two colors, but with any $\Sigma^0_n$ family of colorings. We conjecture that, modifying conveniently the proofs of $\ramsey(\Sigma^0_0) \implies \koenig$ \cite{RT22iff3LLPO} and of  $\koenig \implies \rt^2_k(\Sigma^0_0)$ (Theorem \ref{KoenigRamsey}), we should obtain that for every $k \geq 2$:
\begin{equation}\label{gen}
\llpo{(n+3)} \iff \rt^2_k(\Sigma^0_n).
\end{equation}
A first development of this paper might be to check of the equivalence \ref{gen}, for each $n \in \NN$. By increasing the size of the edges, we conjecture also that for every natural number $n\geq 2$:
\begin{equation}
\llpo{(n+1)} \iff \rt^{n}_k(\Sigma^0_0).
\end{equation}

In this paper we consider Ramsey's Theorem as schema in order to work with first order statements. Another possibility is to study Ramsey's Theorem working in $\ha + $ functions $+$ description axiom (that is a conservative extension of $\ha$, see \cite{LICS04}), in order to use only one statement to express Ramsey's Theorem for pairs in two colors. It seems to us that this unique statement is still equivalent to $\koenig$.

\subparagraph*{Acknowledgements}

The authors are thankful to Alexander Kreuzer and Paulo Oliva for their useful comments and suggestions.

\bibliographystyle{plain}
\bibliography{bibRT22iff3LLPO}
\end{document}